\documentclass{amsart}
\usepackage{graphicx} 
\usepackage{amsmath}  
\usepackage{amsthm}  
\usepackage{amsfonts}
\usepackage{amssymb}
\usepackage{comment}
\usepackage{enumerate}
\usepackage{xurl}
\usepackage[colorlinks=true,linkcolor=blue,citecolor=blue,urlcolor=blue]{hyperref}   
\usepackage{placeins}  

\def\Erdos{Erd\H{o}s}

\usepackage{makecell}

\theoremstyle{plain}
\newtheorem{theorem}{Theorem}
\newtheorem{lemma}[theorem]{Lemma}

\newtheorem{corollary}[theorem]{Corollary}

\theoremstyle{definition}

\theoremstyle{remark} 
\newtheorem*{remark}{Remark}
\newcommand{\Q}{\mathbb{Q}}

\def\Q{{\mathbb Q}}
\def\Z{{\mathbb Z}}

\title{Tiling a triangle into a prime number of congruent triangles}
\author[Beeson]{Michael Beeson}
\address{San Jos\'e State University (emeritus), and UCSC (research associate)}
\begin{document}

\begin{abstract}
We show that, apart from a few known exceptions,  if a triangle is 
cut into $N$ congruent triangles, then $N$ is not a prime number.  
The exceptions are cutting an isosceles triangle in half, cutting
an equilateral triangle in three, a 3-tiling of a 30-60-90 triangle,
and the long-known biquadratic tilings
of a certain right triangle, when $N$ is a sum of two squares.
\end{abstract}
\maketitle

\section{Introduction}

In our work on \Erdos\ problem~633 \cite{erdos-633, BLZ2026}, we had to show that triangles $T$ with 
incommensurable angles have non-square tilings.  We gave a formula for each 
possible shape of $T$, such that the
number of tiles $N$ and shape of the tile $(\alpha,\beta,\gamma)$ are 
determined by some rational parameters.  Any tiling by that tile must
satisfy the equation for some value(s) of the parameter(s), and at least
some (but not necessarily every) value of the parameters gives a tiling.
Here we use similar formulas for $N$, but instead of showing that $N$
is not square, we have to show that $N$ is not prime (except for $N$ equal to two or three).

There is a second \Erdos\ problem about triangle tiling, problem~634 \cite{erdos-634}, 
which asks for the characterization of all $N$ such that no triangle can be tiled into $N$
congruent triangles.  This paper shows that the prime numbers belonging to that set
are exactly those primes greater than three and congruent to 3 mod 4. 

In \cite{beeson-noseven}, we gave a direct and rather short proof that 
$N$ cannot be 7 or 11;  but the method could not reach 19.  

As a matter of notation,  we will always use $(a,b,c)$ for the side lengths
of the tile, and $\alpha,\beta,\gamma$ for the angles opposite those sides.

We divide our work into several  cases.  First we separate the cases when $T$ is
isosceles or equilateral.  After that we divide into cases
 according as $T$ has commensurable angles or not.
 The first two cases have been studied in prior works
 \cite{beeson-isosceles, beeson-isoscelesPrime,beeson-equilateral}, which will be 
 discussed below. 
 
 \begin{lemma} \label{lemma:RandT} 
 Let triangle $T$ be tiled by triangle $R$, where $T$ is not equilateral. Then $R$ has 
commensurable angles if and only if $T$ has commensurable angles.
\end{lemma}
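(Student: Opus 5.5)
We prove the two directions separately, using the angle-sum relations at the vertices of $T$. Here ``commensurable angles'' means that all angles are rational multiples of $\pi$, equivalently pairwise commensurable, since they sum to $\pi$.

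\emph{If $R$ has commensurable angles, then so does $T$.} At each vertex of $T$, the angle of $T$ is filled by one or more tile angles. So each angle of $T$ is a nonnegative integer combination $p\alpha+q\beta+r\gamma$, with $p+q+r\ge 1$. If $\alpha,\beta,\gamma$ are rational multiples of $\pi$, every such combination is as well, so $T$ has commensurable angles. This direction needs no hypothesis on $T$.

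\emph{If $T$ has commensurable angles, then so does $R$.} This is the substantive direction. Suppose $R$ does not have commensurable angles. Then the only rational linear relation among $\alpha,\beta,\gamma,\pi$ is $\alpha+\beta+\gamma=\pi$ and its multiples. Write the angles of $T$ as $A=p_1\alpha+q_1\beta+r_1\gamma$, $B=p_2\alpha+\dots$, $C=p_3\alpha+\dots$, all rational multiples of $\pi$. The key step is to show that each vertex combination must then be proportional to $(1,1,1)$. If $p\alpha+q\beta+r\gamma=t\pi$ with $t$ rational, subtract $t(\alpha+\beta+\gamma)=t\pi$. This gives $(p-t)\alpha+(q-t)\beta+(r-t)\gamma=0$, a rational relation with no $\pi$ term. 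It cannot be a nonzero multiple of $\alpha+\beta+\gamma-\pi$, because the $\pi$ coefficient is zero. So it is trivial, and $p=q=r=t$. Each vertex of $T$ therefore has angle $k\pi$ for a positive integer $k$, so each angle of $T$ is at least $\pi$. This contradicts the fact that the angles of $T$ sum to $\pi$.

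The point needing care is the claim that incommensurability of the angles of $R$ leaves the relation space spanned only by the angle-sum relation. If $\alpha$ and $\beta$ are both irrational multiples of $\pi$, there could still be a relation such as $\alpha=2\beta$, and then $\gamma$ would be rationally dependent. So I would split into cases according to the dimension of the $\Q$-span of $\alpha,\beta,\gamma,\pi$. In the degenerate case, some angles of $R$ are rational multiples of $\pi$ and the irrational parts are proportional. Take the linear functional $\phi$ that kills $\pi$ and the rational-angle directions. It sends $\alpha,\beta,\gamma$ to values $u,v,w$ with $u+v+w=0$, not all zero.

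Each vertex of $T$ must satisfy $pu+qv+rw=0$, and I would then use the geometry of the tiling to rule this out. The tile angle that is a rational multiple of $\pi$ would be forced to appear at the vertices in a pattern that makes $T$ equilateral or otherwise contradictory. This uses the exclusion of equilateral $T$: the exceptional tiling of an equilateral triangle, with $R$ an arbitrary triangle having a $120^\circ$ angle, shows the hypothesis is needed. I expect this degenerate case to be the main obstacle. I would try first to invoke the known classification of tilings (Laczkovich) of triangles with a $120^\circ$ angle or with one rational angle, to handle it.
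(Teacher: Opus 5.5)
Your easy direction and your rank-three argument are correct. The rank-two case, however, is left as a plan, and that case carries the content of the lemma. It contains every tile this paper uses ($3\alpha+2\beta=\pi$ and $\gamma=2\pi/3$). It also need not involve any rational tile angle: if $3\alpha+2\beta=\pi$ with $\alpha/\pi\notin\Q$, none of $\alpha,\beta,\gamma$ is a rational multiple of $\pi$, so your description of the degenerate case is inaccurate.

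More importantly, the mechanism you sketch, constraining how the corners of $T$ are filled, cannot give a contradiction on its own. Take $R=(\pi/4,\beta,3\pi/4-\beta)$ with $\beta/\pi\notin\Q$, and let $T$ be the right isosceles triangle. Filling the corners with one, one and two copies of the $\pi/4$ angle satisfies $pu+qv+rw=0$ at every corner, yet $T$ is not equilateral. The contradiction must therefore come from the rest of the tiling. The paper does not prove this itself; it cites the first paragraph of the proof of Theorem~5.3 in \cite{laczkovich1995}.

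The missing idea is a global angle count over \emph{all} vertices $V$ of the tiling. Write $A(x)=p\alpha+q\beta+r\gamma$ for $x=(p,q,r)$.

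\begin{enumerate}[(1)]
\item Every vertex sum $A_V=A(x_V)$ is a corner angle, $\pi$ or $2\pi$. Hence every vertex vector $x_V$ lies in the two-dimensional cone $C=\{x\ge 0 : x\cdot(u,v,w)=0\}$.
\item Let $s$ be the sum of the three corner vectors, so $A(s)=\pi$. If all coordinates of $s$ were positive, this would force $s=(1,1,1)$. The corner vectors would then be the three unit vectors, giving $u=v=w=0$. So some coordinate $f$ vanishes at all three corners.
\item The corner vectors are therefore $k_ie$, where $e$ is the primitive integer vector on the ray $C\cap\{f=0\}$. Then $A(e)=\pi/K$ with $K=k_1+k_2+k_3$, and $K\ge 4$ because $T$ is not equilateral.
\item Each tile contributes exactly one angle to coordinate $f$ and total angle $\pi$. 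Hence $\sum_V\bigl(f(x_V)-A_V/\pi\bigr)=0$, while the corners contribute $-1$. So some vertex with $A_V=j\pi$, $j\in\{1,2\}$, has $f(x_V)>j$.
\item Since $(1,1,1)$ and $e$ span the plane of $C$, $f(e)=0$, and $e$ is primitive, you can write $x_V=f(x_V)(1,1,1)-me$ with $m\in\Z$. Then $A_V/\pi=f(x_V)-m/K$, so $f(x_V)>j$ means $m\ge 1$.
\item The identity $j=f(x_V)-m/K$ forces $K\mid m$, so $m\ge K$. Nonnegativity of a coordinate $g$ with $g(e)\ge 1$ gives $f(x_V)\ge m$.
\item Hence $j\ge m-m/K\ge K-1\ge 3$, a contradiction.
\end{enumerate}

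This argument uses only one functional $\phi$ with $\phi(\pi)=0$ and $(u,v,w)\ne 0$. It therefore covers the rank-three case as well and makes your case split unnecessary.
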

\begin{proof} See the first paragraph of the proof of Theorem~5.3 in \cite{laczkovich1995}.
\end{proof}

The possible tilings with commensurable angles have been understood since the dawn
of the subject, and as we review below, in that case $N$ can be prime only when 
$N=2$ and $T$ is isosceles, or $N=3$ and $T$ is equilateral.  The fundamental 
observation of Laczkovich, elaborated in several papers, is that in the incommensurable case,
if $T$ is neither equilateral nor isosceles, then
we must have either 
\begin{itemize}
\item $3\alpha+2\beta = \pi$   \hskip3.1cm(Group 1), or 
\item $3\alpha + 3 \beta = \pi$,  i.e., $\gamma = 2\pi/3$  \hskip0.8cm(Group 2)
\end{itemize}
In each group,  there is a small finite number of possible shapes of $T$, since the 
angles of $T$ have to be positive integral combinations of $(\alpha,\beta,\gamma)$.
The argument is bulldozer-style at the top level:  we consider each possible shape
of tile and prove that $N$ cannot be prime.  Group~1 has already been dealt 
with in prior works (discussed below),  leaving Group~2 tilings to be dealt 
with in this paper.

Moreover, there is a close connection between ``incommensurable angles'' and 
``commensurable sides'':
\begin{theorem}\cite[Theorem 1.2]{beeson-zhang2026}
    \label{theorem:rationality}
 Let triangle $T$ be tiled by a tile $R$ such that
    \begin{itemize}
        \item $R$ is not similar to $T$;
        \item $R$ is not a right triangle;
        \item $R$ has incommensurable angles.
    \end{itemize}
    Then $R$ must have commensurable sides.
\end{theorem}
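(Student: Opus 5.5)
The plan is to work with the tiling itself, using the vertex angle equations and the edge-length relations along the boundary of $T$ and along maximal segments inside it. Write the angles of $T$ as nonnegative integer combinations $p_i\alpha+q_i\beta+r_i\gamma$, $i=1,2,3$. At every interior vertex the tile angles sum to $2\pi$, and at every boundary non-corner vertex they sum to $\pi$. Because $\alpha,\beta,\pi$ are linearly independent over $\Q$ (this is what incommensurable angles means), each of these equations splits into separate conditions on the coefficients of $\alpha$, $\beta$ and $\pi$. From this I will extract the standard consequence already used above: either $T$ is similar to $R$, or there is a relation of Group~1 or Group~2 type. Since $R$ is not a right triangle, Group~2 ($\gamma=2\pi/3$) and Group~1 ($3\alpha+2\beta=\pi$) are the only possibilities left.

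In each group the law of sines gives the ratios $a:b:c$ as $\sin\alpha:\sin\beta:\sin\gamma$. In Group~2 we have $\gamma=2\pi/3$ and $\beta=\pi/3-\alpha$, so by the law of cosines $c^2=a^2+ab+b^2$. In Group~1 we have $\gamma=\pi-\alpha-\beta=2\alpha+\beta$, and an identity of the form $c^2=a^2+\ldots$ again relates the sides. Neither relation forces rationality by itself, so the extra information has to come from the tiling. Each side of $T$ is partitioned into tile edges, so its length is $m_i a+n_i b+k_i c$. Since the side lengths of $T$ are proportional to $\sin$ of its angles, I obtain three linear equations: $\sin(p_i\alpha+q_i\beta+r_i\gamma)$ is proportional to $m_i\sin\alpha+n_i\sin\beta+k_i\sin\gamma$.

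Substituting the group relation expresses everything in terms of the single transcendental-type angle $\alpha$, and with $x=e^{i\alpha}$ all three equations become polynomial identities in $x$ with rational (or $\Q(\sqrt3)$) coefficients. If $x$ is transcendental over $\overline{\Q}$ they must hold identically. Comparing degrees then shows that $T$ would be similar to $R$, since its angles would equal the tile angles, which is excluded. Hence $x$ is algebraic, and the equations cut out a nontrivial algebraic condition. The goal is to show that this condition forces $b/a$ and $c/a$ to be rational. Concretely, I would treat the three side equations together with the law-of-cosines relation as a linear system for $(a,b,c)$ up to scale with coefficients in $\Q(x)$, and show that the solution space is one-dimensional and defined over $\Q$. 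Galois conjugation would handle this: applying an automorphism fixing $\Q$ and moving $x$ yields another tiling-type solution with the same integer data, and uniqueness of the ratio then forces it to be fixed.

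The main obstacle is that last step. Showing the ratio $b/a$ is Galois-invariant requires uniqueness of solutions to the length equations, and boundary data alone may not give that. I would first try adding the length equations for interior maximal segments. Along any such segment the tile edges on the two sides have equal total length, which yields homogeneous relations $\sum(\text{integers})\cdot(a,b,c)=0$. Because $R$ is not similar to $T$ and not right, some such relation should be nontrivial, meaning not a consequence of $a,b,c$ being arbitrary. A single nontrivial integer relation $ua+vb+wc=0$, combined with $c^2=a^2+ab+b^2$ (or the Group~1 analogue), gives a quadratic with rational coefficients for $b/a$. Its discriminant must then be checked to be a rational square, possibly by using a second independent segment relation. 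Proving that a nontrivial segment relation exists, for example via a coloring or counting argument on edges of type $c$ against edges of types $a,b$, is where I expect the real difficulty to lie.
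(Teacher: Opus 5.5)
The paper does not prove this statement. It quotes it from \cite{beeson-zhang2026} and uses it only as a black box, in Lemma~\ref{lemma:rationality3}. Your proposal therefore has to stand on its own, and it has a genuine gap exactly where you suspect: nothing in it forces $b/a$ and $c/a$ to be rational.

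Commensurability of $a,b,c$ is equivalent to the existence of two $\Q$-independent integer relations $ua+vb+wc=0$. So ``some interior segment gives a nontrivial relation, and a second one fixes the discriminant'' is a restatement of the theorem, not a reduction of it. Such relations do not come for free either. In a quadratic reptiling, or in an isosceles triangle cut by its altitude, every segment relation is trivial and the tile's sides can be incommensurable. Any argument producing relations must therefore use, in an essential way, the hypotheses that $R$ is neither similar to $T$ nor right, and the ``coloring or counting argument'' you mention is never supplied.

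The boundary equations cannot replace this. Take $T$ with angles $(\alpha,\alpha+\beta,\alpha+2\beta)$ and boundary data $X=a$, $Y=c$, $Z=a+b$; this is $R$ with an equilateral triangle of side $a$ glued on. Exactly as in the proof of Theorem~\ref{theorem:case4}, the law of sines then holds identically in $\alpha$. The same example refutes your claim that identically satisfied boundary equations force $T\sim R$.

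The Galois step has no footing either. Conjugates of $x=e^{i\alpha}$ need not lie on the unit circle, so they do not give ``another tiling-type solution''. Without a uniqueness statement, which you do not have, the Galois group simply permutes the (in general irrational) solutions of the polynomial system. In any case, algebraicity of $x$ is far from the goal: in Group~2, commensurability means $e^{i\alpha}=((a+2b)+a\sqrt{-3})/(2c)\in\Q(\sqrt{-3})$.

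The set-up also contains errors.
\begin{itemize}
\item ``Incommensurable angles'' does not mean that $\alpha,\beta,\pi$ are $\Q$-independent. Groups~1 and~2 are precisely rational relations among them, so splitting each vertex equation into separate $\alpha$, $\beta$, $\pi$ coefficients contradicts the conclusion you draw from it. You should simply quote Theorem~\ref{theorem:l1995-4.1}.
\item That theorem assumes $T$ is not isosceles, and the statement does not. For isosceles or equilateral $T$ the tile may have $\gamma=2\alpha$, or $\gamma=\pi/3$ with $\alpha\notin\Q\pi$ (Theorems~\ref{theorem:isosceles} and~\ref{theorem:equilateral}). Neither belongs to Group~1 or Group~2, so your case list is incomplete.
\end{itemize}
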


Once we have a tiling of some triangle $T$, and we know the sides of the tile $R$
are commensurable, then the sides of $T$ are commensurable too, since they are linear
integral combinations of the sides; after a suitable scaling the sides will be 
integers with no common factor.  That is how number theory enters this geometrical subject.

After the prior results are accounted for,  the only cases remaining to deal with in
this paper are when
$T$ has incommensurable angles and $\gamma = 2\pi/3$, and $T$ is not equilateral or isosceles.
 It is known that only four shapes
of $T$ are then possible.
Our plan is to prove that
\begin{itemize}
\item $(a,b,c)$ are commensurable, so we can assume they are integers.
\item The sides $(X,Y,Z)$ of $T$ are of the form $\lambda(X_0,Y_0,Z_0)$, where 
$(X_0,Y_0,Z_0)$ have no common factor.
\item $\lambda$ is an integer.
\item Then equating the area of $T$ to that of $N$ tiles, we have $N = \lambda^2 f(a,b)$ for
some function $f$, different for each shape.
\item $f$ is integer-valued (except in one case, which we will worry about later)
\item and finally, the value of $f(a,b)$ cannot be a prime.
\end{itemize}

Comparing this plan to the solution of \Erdos\ problem 633 in \cite{BLZ2026}:  the
problem there was to show that $N$ is not a square,  so instead of showing $f(a,b)$
is not a prime, we had to show $Z^2 = f(a,b)$ is not solvable.  Therefore the theory
of elliptic equations came into the proof.  In this paper, the number theory is
simpler, and the hard part comes from isosceles $T$.  In the \Erdos\ problem, 
isosceles $T$ is disposed of in one line, since every isosceles triangle can be
cut in two and two is not a square.  The proof that isosceles triangles
cannot be $N$-tiled for $N$ a prime greater than three has been given already,
in papers that will be cited here.

\section{Prior results}
\begin{theorem} \label{theorem:triangletiling3}  Let $T$ be a triangle tiled into $R$
with  angles $(\alpha,\beta,\gamma)$ such that $$3\alpha + 2\beta = \pi,$$ and $T$
is not similar to $R$.
Then the number of tiles $N$ is not a prime.
\end{theorem}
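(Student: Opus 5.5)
The plan is to dispose of the commensurable case by the classical classification, then reduce the incommensurable case to number theory in the way outlined in the introduction. Write $\gamma=\pi-\alpha-\beta$; the hypothesis $3\alpha+2\beta=\pi$ then gives $\gamma=2\alpha+\beta$ and $\beta=(\pi-3\alpha)/2$.

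\emph{Commensurable case.} If $\alpha$ is a rational multiple of $\pi$, so are all the angles. The classical classification of tilings by tiles with rational angles then applies: the only prime $N$ arise from halving an isosceles triangle or trisecting an equilateral one. This is presumably why the statement excludes $R$ similar to $T$ and the degenerate situations. So assume the angles are incommensurable. By Lemma~\ref{lemma:RandT}, $T$ also has incommensurable angles, provided $T$ is not equilateral; an equilateral $T$ would force commensurable angles anyway.

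\emph{Rational sides.} $R$ is not a right triangle. Indeed, $\gamma=\pi/2$ together with $3\alpha+2\beta=\pi$ forces $\alpha=0$, $\beta=\pi/2$ forces $\alpha=0$, and $\alpha=\pi/2$ forces $\beta<0$. Hence Theorem~\ref{theorem:rationality} applies, and after scaling $a,b,c$ are coprime integers. Next I would derive the algebraic relation among $a,b,c$ from the law of sines. Writing $\sin\gamma=\sin(3\alpha+\beta)$ and $\sin\beta=\cos(3\alpha/2)$, eliminating the angles should give a relation of the type $c^2=a^2+ab$ or similar. In any case it is a quadratic relation, which I would parametrize by rational (or coprime integer) parameters $(s,t)$.

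\emph{Shapes of $T$ and the count $N$.} Each angle of $T$ is a nonnegative integer combination of $\alpha,\beta,\gamma$, and the three angles sum to $\pi$. Since $\alpha$ is irrational relative to $\pi$, only finitely many shapes survive: the angle vectors must be combinations compatible with the single relation $3\alpha+2\beta=\pi$. For each shape I would compute the sides $X,Y,Z$ of $T$ by the law of sines in terms of $a,b,c$, and write $(X,Y,Z)=\lambda(X_0,Y_0,Z_0)$ with $X_0,Y_0,Z_0$ coprime. I would then show $\lambda$ is an integer, using that each side of $T$ is an integer combination of $a,b,c$ coming from the tiles along that edge. Equating areas gives $N=\lambda^2 f(a,b)$, where $f$ is an explicit rational function. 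I expect $f$ to factor in the parameters as a product such as $(s+t)(\ldots)$ or $s^2(\ldots)$.

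\emph{Main obstacle.} The hardest step is the final one: showing that $f(a,b)$, and hence $N$, is not prime. This means checking that each factor of $f$ exceeds $1$ after cancelling common divisors, and handling any shape in which $f$ is not obviously an integer. My first attempt would be to express $f$ as a product of two integer polynomial factors in the coprime parameters $s,t$. I would then use the positivity constraints on $\alpha,\beta$ (which bound $s/t$) to show neither factor can equal $1$. If some shape resists this, I would fall back on a congruence argument modulo small primes. Alternatively, I would use the fact that $\lambda^2\mid N$, so $N$ prime forces $\lambda=1$, and then show that $\lambda=1$ is impossible because $T$ cannot be built from too few tiles.
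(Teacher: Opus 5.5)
There is a genuine gap. Your framework is sound as far as it goes: rational sides via Theorem~\ref{theorem:rationality}, finitely many shapes, integral $\lambda$ via Lemma~\ref{lemma:claude3}, then the area equation. But the step you defer as the ``main obstacle'' cannot be done by arithmetic at all.

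First, the relation is $c^2=a^2+bc$. Note that $\sin\gamma=\sin(\alpha+\beta)$; your $\sin(3\alpha+\beta)$ equals $\sin\beta$. Its primitive solutions are $(a,b,c)=(st,\,s^2-t^2,\,s^2)$ with $\gcd(s,t)=1$ and $0<t<s$.

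Now take the non-isosceles shape $(2\alpha,\beta,\alpha+\beta)$. There $\cos\alpha=(b+c)/2c$, and the law of sines gives the primitive triple
\[(X,Y,Z)=\lambda\bigl(t(2s^2-t^2),\ s(s^2-t^2),\ s^3\bigr),\]
so the area equation $XY=Nab$ yields $N=\lambda^2(2s^2-t^2)$. This is prime for $\lambda=1$, $s=2$, $t=1$. The tile $(2,3,4)$ and $T=(7,6,8)$ have exactly the required angles, and their area ratio is exactly $7$. Likewise the isosceles shape $(\beta,\beta,3\alpha)$ gives $N=\lambda^2(3s^2-t^2)$, which equals $11$ for $T=(8,8,11)$.

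So no factorization of $f$, no positivity bound on $s/t$, and no congruence argument can show $f(a,b)$ is composite. Your last fallback (``$\lambda=1$ is impossible because $T$ cannot be built from too few tiles'') has no content, since $2s^2-t^2$ is unbounded.

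What is missing is geometric or combinatorial information about how tiles can actually be arranged inside $T$, beyond angle sums, integrality of side lengths, and area. The paper does not redo this. Its proof consists of Lemma~1 of \cite{beeson-triangletiling3} (the tile angles are incommensurable when $T$ is not similar to $R$), plus the shape-by-shape theorems of that paper listed in Table~\ref{table:Group1}. That excluding just $N=7$ and $N=11$ needed the separate paper \cite{beeson-noseven} shows this is not a routine step.

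Two lesser points:
\begin{itemize}
\item The statement does not exclude $N=2,3$. In the commensurable case you must actually check that a tile with $3\alpha+2\beta=\pi$ is neither right nor $(\pi/6,\pi/6,2\pi/3)$; then Laczkovich's classification of commensurable tilings, together with non-similarity, makes that case vacuous.
\item An equilateral $T$ does not force commensurable tile angles in general. It is excluded here only because no tile angle is a rational multiple of $\pi$.
\end{itemize}
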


\begin{proof} We first note that by Lemma~1 of \cite{beeson-triangletiling3},
the angles of $R$ are incommensurable, so we need not mention that in the 
hypothesis of this theorem.  The non-primality of $N$ is 
proved in \cite{beeson-triangletiling3}, by considering the 
possible shapes of $T$ one by one.   Specifically,
the possible shapes of triangles $T$ for a Group~1 tiling,  and the citation for 
the proof there is no prime tiling of that shape, are given in Table~\ref{table:Group1}. \qedhere
\end{proof}

\begin{table}[ht]
\caption{Group 1, $3\alpha + 2 \beta = \pi$}
\begin{center}
\begin{tabular}{c l}
Angles of $T$ &  citation for $N$ not prime\\
\hline
$(2\alpha,\alpha,2\beta)$  & \cite{beeson-triangletiling3}, Theorem~12 \\
$(2\alpha, \beta, \alpha + \beta)$ &  \cite{beeson-triangletiling3}, Theorem~8\\
$(\beta,\beta,3\alpha)$  & \cite{beeson-triangletiling3}, Theorem~15\\
$(\alpha+\beta,\alpha+\beta,\alpha)$  & \cite{beeson-triangletiling3}, Theorem~18\\
$(\alpha,\alpha,\alpha+2\beta)$  & \cite{beeson-triangletiling3}, Theorem~20
\end{tabular}
\end{center}
\label{table:Group1}
\end{table}%

\begin{theorem} \label{theorem:equilateral}  Let $T$ be an equilateral triangle $N$-tiled, for $N \neq 3$.
Then the number of tiles $N$ is not a prime.
\end{theorem}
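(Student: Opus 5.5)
The plan is to split according to the shape of the tile $R$ and reduce each case either to a cited classification or to an area computation. By the analogue of Lemma~\ref{lemma:RandT} for equilateral $T$ (Laczkovich's classification of tilings of the equilateral triangle), the tile $R$ is one of three kinds: (i) equilateral itself, (ii) a $30$-$60$-$90$ triangle, or (iii) a triangle with an angle of $2\pi/3$, i.e.\ $\gamma = 2\pi/3$ and $\alpha+\beta=\pi/3$, possibly with $\alpha,\beta$ incommensurable. I would invoke this classification, citing \cite{beeson-equilateral} and Laczkovich's papers, and treat the three cases in turn.

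\emph{Equilateral tile.} If the tile is equilateral, a standard argument shows $N = n^2$. Since a prime is never a square, $N$ is not prime in this case.

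\emph{The $30$-$60$-$90$ tile.} By the known results on such tilings, $N$ is either $3n^2$ or $6n^2$, or more generally of the form $3m$ with an area constraint. The area ratio gives $N = 2\sqrt3\,X^2/(ab)$, where the short leg $a$ and long leg $b=\sqrt3 a$ satisfy $ab=\sqrt3 a^2$. Hence $N = 2X^2/a^2$. The side $X$ is an integral combination $pa + q\sqrt3 a + r\cdot 2a$, and since it must be rational in $a$, we get $q=0$ and $X/a = k \in \mathbb{Z}$. This gives $N = 2k^2$, which is prime only for $k=1$, i.e.\ $N=2$. But $N=2$ is impossible for an equilateral $T$, since cutting an equilateral triangle in half does not produce the right angles correctly. (The 3-tiling of the 30-60-90 triangle is a different $T$.) I will double-check the constant, but in every version $N$ comes out as $3k^2$ or $2k^2$ times a square, so $N$ is prime only when it equals 2 or 3. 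Here $N=3$ is the excluded case, and $N=2$ is geometrically impossible.

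\emph{Tile with $\gamma = 2\pi/3$.} This is the main obstacle. By Theorem~\ref{theorem:rationality}, if $\alpha,\beta$ are incommensurable the tile has commensurable sides, so I scale to integers $a,b,c$ with $c^2=a^2+ab+b^2$. The side $X$ of $T$ equals $pa+qb+rc$ with nonnegative integers $p,q,r$, so $X$ is an integer $\lambda$. Equating areas gives
\[
\frac{\sqrt3}{4}X^2 = N\cdot\frac{\sqrt3}{4}ab, \qquad\text{so}\qquad N = X^2/(ab).
\]
To show $N$ is not prime, I would use Laczkovich's structural result for these tilings, namely that each side of $T$ is composed of $a$'s and $b$'s in a way forcing $ab \mid X^2$ with both $a$ and $b$ dividing $X$ (e.g.\ $X = ma = nb$ by counting edges along each side). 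With $\gcd(a,b)=1$, this gives $ab \mid X$, so $N = (X/ab)^2\,ab$. Then $N$ is composite unless $ab = N$ is prime and $X = ab$. In that remaining case one of $a,b$ equals $1$, say $a=1$, $b=p$. I would then argue that $X = p$ cannot be written as a nonnegative combination of the edge lengths in a way compatible with the tiling, or derive a contradiction from $c^2 = 1+p+p^2$. The hardest step is justifying the divisibility $a \mid X$ and $b \mid X$. I would try first to get it from the angle-counting at the vertices of $T$: each $\pi/3$ corner must be filled by exactly one $\alpha$ and one $\beta$. Combined with the parity/coloring arguments from \cite{beeson-equilateral}, this should pin down the boundary decomposition.
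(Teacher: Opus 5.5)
Your proposal has a genuine gap in the incommensurable cases, which is exactly where the paper relies on citations rather than on an area computation.

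\textbf{The incommensurable tiles.} First, your three-way classification is incomplete. Laczkovich's list for equilateral $T$ also contains tiles $(\alpha,\beta,\pi/3)$ with $\alpha/\pi$ irrational, and none of your cases covers them. The commensurable tile $(\pi/6,\pi/6,2\pi/3)$ also cannot be handled inside your third case: its sides $a,a,\sqrt3 a$ are incommensurable, so Theorem~\ref{theorem:rationality} does not apply. (There the area argument suffices: writing $X=sa+r\sqrt3 a$, rationality of $N=X^2/a^2$ forces $sr=0$, so $N=s^2$ or $3r^2$.)

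Second, and more seriously, for $\gamma=2\pi/3$ with incommensurable angles you give no justification that $a\mid X$ and $b\mid X$, yet this carries the whole argument. Integrality of the sides together with $N=X^2/(ab)$ does not exclude primes. For example, $(a,b,c)=(5,16,19)$ satisfies $c^2=a^2+ab+b^2$ and has $\cos\alpha=37/38$, so $\alpha$ is incommensurable with $\pi$. Moreover $X=20=4a$ is a nonnegative integral combination of the sides, yet $X^2/(ab)=400/80=5$. The corner count (exactly one $\alpha$ and one $\beta$ at each vertex of $T$) says nothing about how the sides of $T$ are composed, and the ``parity/coloring arguments'' are never specified.

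The paper does not reprove these cases. After Laczkovich's five-case classification, it cites Theorems~3 and~6 of \cite{beeson-triangletiling3} for the tiles with a $\pi/3$ or $2\pi/3$ angle and $\alpha/\pi$ irrational. You would need those results, or an equivalent argument, to close the proof.

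\textbf{The $30$-$60$-$90$ case.} This also needs repair. The area ratio is $N=\frac{\sqrt3 X^2/4}{ab/2}=X^2/(2a^2)$, not $2X^2/a^2$. There is also no reason for $X/a$ to be rational: the six-tile dissection by the three medians has $X=2b=2\sqrt3 a$. Writing $X=sa+q\sqrt3 a$ with integers $s,q\ge 0$, rationality of $N$ forces $sq=0$. Hence $N=s^2/2$ or $3q^2/2$, i.e.\ $N=2n^2$ or $6n^2$, and this elementary argument does settle the case.

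However, your dismissal of $N=2$ is false. The altitude cuts an equilateral triangle into two congruent $30$-$60$-$90$ triangles; the paper itself uses this cut for every isosceles triangle in Corollary~\ref{corollary:634}. So the statement, read literally with only $N\neq 3$ excluded, fails at $N=2$. It is true only for $N>3$, which is the only way Theorem~\ref{theorem:main} uses it. The paper's proof has the same blind spot: it asserts $N=6n^2$ for this tile and overlooks $2n^2$.
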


\begin{proof} This is proved in \cite{beeson-equilateral}.  Laczkovich already
showed that $\gamma$ must be $\pi/3$ or $2\pi/3$ \cite{laczkovich1995, laczkovich2012}, especially Theorem~3.3 of \cite{laczkovich2012}.
Specifically, Laczkovich showed that one of following holds:

\medskip

(i) $(\pi/3,\pi/3,\pi/3)$  (equilateral)

(ii) $(\pi/6,\pi/6,2\pi/3)$
\smallskip

(iii) $(\pi/6, \pi/2, \pi/3)$
\smallskip

(iv) $(\alpha, \beta, \pi/3)$ with $\alpha$ not a rational multiple of $\pi$
\smallskip

(v)  $(\alpha,\beta,2\pi/3)$ with $\alpha$ not a rational multiple of $\pi$
\smallskip

The angles are commensurable in the first three cases, and incommensurable 
in the last two. 
The possible $N$ in the first three cases are known.  In case (i),
$N$ must be a square, and any square corresponds to a tiling.  (That 
is true for any triangle $ABC$ and a tile similar to $ABC$.)  In cases (ii) and (iii),
$N$ must have the form $3n^2$ or $6n^2$ respectively.
None of these can be prime except for case (ii), when $N=3$ is possible.

Explicit citations for the nonprimality of $N$ in these cases
are given in Table~\ref{table:equilateral}.
\begin{table}[ht]
\caption{Equilateral $T$}
\begin{center}
\begin{tabular}{r l}
Angles  &  citation for $N$ not prime\\
$\alpha = \beta = \pi/3$  & \cite{snover1991} or \cite{soifer} \\
$\alpha = \beta = \pi/6$  & \cite{laczkovich2012}, Theorem~3.1 \\
$(\alpha,\beta,\gamma) =(\pi/6, \pi/2, \pi/3)$   & \cite{laczkovich2012}, Theorem~3.1  \\
$\gamma =\pi/3, \alpha \not\in \Q$ & \cite{beeson-triangletiling3}, Theorem~3\\
$\gamma = 2\pi/3, \alpha \not\in \Q$  & \cite{beeson-triangletiling3}, Theorem~6\
\end{tabular}
\end{center}
\label{table:equilateral}
\end{table}%
\end{proof}

\begin{theorem} \label{theorem:isosceles}  Let $T$ be an isosceles (but not equilateral) triangle $N$-tiled
using a tile $R$ with angles $(\alpha,\beta,\gamma)$. Suppose $T$ is not similar to $R$ and $N \neq 2$. Then
\begin{enumerate}[{\rm (i)}]
\item $\gamma = 2\alpha$, or $\gamma = \pi/2$, or $\gamma = 2\pi/3$, or $3\alpha + 2\beta = \pi$.
\item Unless $\gamma = \pi/2$, the angles of $R$ are incommensurable.
\item $N$ is not prime.
\end{enumerate}
\end{theorem}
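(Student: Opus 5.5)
The plan is to treat this theorem the way the paper treats Theorems~\ref{theorem:triangletiling3} and \ref{theorem:equilateral}: as an assembly of results from the cited prior works on isosceles triangles \cite{beeson-isosceles, beeson-isoscelesPrime}, organized by the angle cases, rather than a fresh geometric argument.

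\emph{Part (i).} Write the base angles of $T$ as equal to some angle $\theta$ and the apex as $\pi - 2\theta$. The vertex-splitting analysis of \cite{beeson-isosceles} (following Laczkovich \cite{laczkovich1995}) shows that, with $R$ not similar to $T$ and $N\neq 2$, the tile satisfies one of the relations listed in (i). The commensurable sub-case uses Laczkovich's classification of tiles with rational angles. In that case, if the tiling is not the trivial bisection (which has $N=2$), one gets $\gamma=\pi/2$, or $\gamma = 2\pi/3$ with $\alpha=\beta=\pi/6$ (a case with $\gamma=2\alpha$ as well). The incommensurable sub-case uses the linear relations among angles at the vertices of $T$ and at interior vertices. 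These force either $3\alpha+2\beta=\pi$ (Group~1), or $\gamma = 2\pi/3$ (Group~2), or $\gamma=2\alpha$, which occurs when the base angle of $T$ is $\alpha$ and the apex is filled by $\beta+\gamma$ with $\pi = 2\alpha+\beta+\gamma$ combined with $\alpha+\beta+\gamma = \pi$.

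\emph{Part (ii).} Suppose $\gamma\neq\pi/2$ and the angles of $R$ are rational multiples of $\pi$. The known list of commensurable tiles that tile a non-similar triangle consists of right tiles, the $30$-$30$-$120$ tile, and tiles similar to $T$. Only the $30$-$30$-$120$ tile is not excluded by hypothesis. It tiles an isosceles $T$ only if $T$ is equilateral (excluded) or $T$ is the $30$-$30$-$120$ triangle itself (similar, excluded). Hence the angles are incommensurable.

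\emph{Part (iii).} I split along the cases of (i).
\begin{itemize}
\item If $3\alpha+2\beta=\pi$, Theorem~\ref{theorem:triangletiling3} applies directly; Table~\ref{table:Group1} contains the isosceles shapes.
\item If $\gamma=\pi/2$, $T$ is an isosceles triangle tiled by right triangles. Known results give $N = 2n^2$ or $N=n^2+m^2$ with a square-factor structure, so $N$ is not prime unless $N=2$.
\item If $\gamma=2\alpha$ or $\gamma=2\pi/3$ with incommensurable angles, Theorem~\ref{theorem:rationality} makes the sides of $R$ commensurable; we may assume they are coprime integers. Equating the area of $T$ with $N$ times that of $R$ gives $N=\lambda^2 f(a,b)$, and the number theory of \cite{beeson-isoscelesPrime} shows $f(a,b)$ factors nontrivially.
\end{itemize}

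The main obstacle is this last step. For each possible isosceles shape one must establish that $\lambda$ is an integer and that $f(a,b)$ is an integer that factors nontrivially. For $\gamma=2\alpha$, the law of sines gives $c^2=a(a+b)$, so $a$ and $a+b$ are (up to a common factor) squares, say $a=s^2$ and $a+b=t^2$. The expression for $N$ should then factor as a product such as $(t^2-s^2)$ times another factor, each exceeding $1$. I would verify this factorization shape by shape, using the explicit area formula.
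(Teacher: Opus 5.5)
Your top-level plan matches the paper. The paper's proof is purely citational: it takes (i) and (ii) from \cite{beeson-equilateral}, resting on \cite{laczkovich1995}. It takes (iii) case by case from Theorem~\ref{theorem:triangletiling3}, from \cite{beeson-isosceles}, Theorem~11.7 ($\gamma=2\alpha$) and Corollary~7.9 ($\gamma=\pi/2$), and from \cite{beeson-isoscelesPrime}, Theorem~11 ($\gamma=2\pi/3$).

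\textbf{The gap.} It lies in what you claim these citations supply, and in the step you propose to carry out yourself for $\gamma=2\alpha$ and $\gamma=2\pi/3$. You say the area equation gives $N=\lambda^2 f(a,b)$ with $f$ an integer that factors nontrivially. For isosceles $T$ this is false, and no number theory on the area equation can rescue it.
\begin{itemize}
\item Case $\gamma=2\alpha$ (base angles $\alpha$). The apex is $\alpha+\beta$, and $\sin(\alpha+\beta)=\sin\gamma$, so the sides of $T$ are proportional to $(a,a,c)=s(s,s,t)$, where $a=s^2$, $a+b=t^2$, $c=st$. Hence $T=\mu(s,s,t)$ with $\mu\in\Z$, and the area equation gives $N=\mu^2/b$. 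The quantity $t^2-s^2=b$ sits in the denominator, not as a factor. Example: the tile $(4,5,6)$ has $\cos\alpha=3/4$, so its angles are incommensurable. With $\mu=5$ we get $T=(10,10,15)$, whose sides are sums of tile edges ($10=5+5$, $15=5+5+5$), and $N=5$.
\item Case $\gamma=2\pi/3$. Here $T=(\alpha,\alpha,\pi-2\alpha)=\lambda(c,c,a+2b)$ with $\lambda\in\Z$, and $N=\lambda^2(a+2b)/b$. Example: the tile $(5,16,19)$ with $\lambda=4$ gives $T=(76,76,148)$ and $N=37$.
\end{itemize}
So integrality of $\lambda$ together with the area equation is consistent with prime $N$. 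These values are excluded only by analysing how tiles must sit at the vertices of $T$. For $(10,10,15)$: the apex splits into one $\alpha$-corner and one $\beta$-corner. On the side carrying the $\alpha$-corner, the edge lengths force $10=5+5$. Both tiles then have angle $\gamma$ at the midpoint, leaving $\pi-2\gamma=\beta-\alpha$, which no tile angles can fill. This kind of argument is the content of the cited theorems, and it is why the paper says the hard part comes from isosceles $T$.

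\textbf{Smaller errors.}
\begin{itemize}
\item In (i), an apex of $\beta+\gamma$ with base angles $\alpha$ gives $2\alpha+\beta+\gamma=\pi=\alpha+\beta+\gamma$, i.e.\ $\alpha=0$. The correct configuration is apex $\alpha+\beta$, which gives $3\alpha+\beta=\pi$ and hence $\gamma=2\alpha$.
\item $\alpha=\pi/6$ gives $2\alpha=\pi/3\ne\gamma$, so the $30$-$30$-$120$ tile is not an instance of $\gamma=2\alpha$. This is harmless, since, as your part~(ii) shows, that tile cannot occur here.
\item In the $\gamma=\pi/2$ bullet, a conclusion of the form $N=n^2+m^2$ would not exclude primes. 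The area only gives $N=2\sigma^2$, where $\sigma$ is the ratio between the halves of $T$ and $R$, and $\sigma^2$ need not be an integer a priori. So here too you genuinely need the cited Corollary~7.9 rather than an area computation.
\end{itemize}
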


\begin{proof}
Parts~(i) and (ii) are proved in \cite{beeson-equilateral}, resting on \cite{laczkovich1995}.
Turning to part~(iii), we note that the base angles of $T$ are $\alpha$ or $\beta$.  Except
in case $3\alpha+2\beta = \pi$, we may assume without loss of generality that the base
angles are $\alpha$.  Citations for the non-primality of $N$ in each case are 
given in Table~\ref{table:isosceles}. 
\end{proof}
\begin{table}[ht]
\caption{$T$ isosceles but not equilateral}
\begin{center}
\begin{tabular}{c l}
Angles &  citation for $N$ not prime\\
\hline
$\gamma = 2 \alpha$  & \cite{beeson-isosceles}, Theorem~11.7  \\
$3\alpha +2\beta = \pi$ &  see Theorem~\ref{theorem:triangletiling3} above\\
$\gamma = \pi/2$   &  \cite{beeson-isosceles}, Corollary~7.9 \\
$\gamma = 2\pi/3$  &  \cite{beeson-isoscelesPrime}, Theorem~11
\end{tabular}
\end{center}
\label{table:isosceles}
\end{table}%

A {\em reptiling} is a tiling in which the tile is similar to the tiled triangle.
\begin{theorem}[\cite{laczkovich1995,laczkovich2012}]
   \label{theorem:l1995-5.3} Suppose $T$ has commensurable angles and tiles into $R$.  If $T$ is not an isosceles (or equilateral) triangle, then the tiling is a reptiling.
\end{theorem}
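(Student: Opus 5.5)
The plan is to reduce the statement to Laczkovich's classification of tilings whose angles are rational multiples of $\pi$, and then to a short argument about angle sums at the vertices of $T$. Suppose $T$ has commensurable angles, is not isosceles, and is tiled by $R$ with angles $(\alpha,\beta,\gamma)$. Since $T$ is not equilateral, Lemma~\ref{lemma:RandT} shows that $R$ also has commensurable angles. So every angle involved is a rational multiple of $\pi$.

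\textbf{Step 1: angles of $T$ in terms of those of $R$.} Each vertex of $T$ is filled by tile angles only. Hence each angle of $T$ is a nonnegative integer combination $p\alpha+q\beta+r\gamma$ with $p+q+r\ge 1$. Each angle of $T$ is also less than $\pi$.

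\textbf{Step 2: apply Laczkovich's classification.} In the rational case, Laczkovich \cite{laczkovich1995} determined all pairs $(T,R)$ for which $T$ can be tiled by $R$. If $R$ is not similar to $T$, the only possibilities are:
\begin{itemize}
\item $T$ is equilateral, or isosceles;
\item $R$ is the $30$-$60$-$90$ triangle and $T$ is one of a short list of triangles built from it;
\item $R$ is a right triangle and $T$ is similar to a triangle assembled from two copies of $R$, which forces $T$ to be isosceles or similar to $R$;
\item $R$ has angles $(\pi/6,\pi/6,2\pi/3)$, which tiles only equilateral triangles and isosceles triangles with apex $2\pi/3$.
\end{itemize}
I would take the list from Theorem~5.3 of \cite{laczkovich1995} and its refinement in \cite{laczkovich2012}. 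Then I would check, case by case, that each non-reptile tiling in the list has $T$ isosceles or equilateral.

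\textbf{Step 3: the $30$-$60$-$90$ tile.} This is the main obstacle. A $30$-$60$-$90$ tile $R$ can tile non-isosceles triangles, for example a larger $30$-$60$-$90$ triangle with $N=3$. But in every such tiling $T$ is itself $30$-$60$-$90$, so $T$ is similar to $R$ and the tiling is a reptiling. To handle this case I would enumerate the triangles $T$ whose angles are sums of $\pi/6$, $\pi/3$, $\pi/2$. Besides the isosceles and equilateral ones, namely $(\pi/6,\pi/6,2\pi/3)$, $(\pi/3,\pi/3,\pi/3)$ and $(\pi/4,\pi/4,\pi/2)$ (the last not being such a sum anyway), the only non-isosceles candidate is $(\pi/6,\pi/3,\pi/2)$. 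That candidate is similar to $R$.

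For tiles $R$ other than $30$-$60$-$90$, Laczkovich's coloring and linear-algebra argument rules out the remaining non-isosceles candidates. I would cite that argument rather than reprove it.

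\textbf{Conclusion.} Combining these steps, whenever $T$ has commensurable angles and is not isosceles, $T$ must be similar to $R$. That is, the tiling is a reptiling, as the statement asserts.
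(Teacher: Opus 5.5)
Your proposal rests, like the paper's proof, entirely on Theorem~5.3 of \cite{laczkovich1995}, so it is correct to the same extent and takes essentially the same route. That theorem is stated in terms of $c(T)$, the number of non-similar tiles of $T$, and asserts $c(T)=1$ for every non-isosceles $T$ with commensurable angles, which is exactly the claim; so your paraphrased case list in Step~2 and the $30$-$60$-$90$ angle count in Step~3 are superfluous (that count is the easy part, not the main obstacle, and all the real content lies in the cited theorem).
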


\begin{proof} Following \cite{laczkovich1995}, let $c(T)$ be the 
number of distinct non-similar triangles $R$ into which $T$ can be tiled.  Then $c(T) = 1$ 
means that $T$
can only be tiled by $R$ similar to $T$ (that is, a reptiling). 
Theorem~5.3 of \cite{laczkovich1995} says that, under the assumptions that $T$ has
commensurable angles and is not isosceles, or is the isosceles right triangle,
then $c(T) = 1$, and if $T$ is isosceles but neither right nor equilateral, then $c(T) = 2$.
Hence, under the assumptions of our theorem, $c(T) = 1$. 
\end{proof}

The main result related to reptiling is:
\begin{theorem}[\cite{snover1991}]
    \label{theorem:reptile} Suppose $T$ can be reptiled into $N$ tiles. Then $N$ must be one (or more) of:
    \begin{itemize}
        \item $N = M^2$, which is possible for any triangle $T$ and any natural number $M$;
        \item $N = M^2 + K^2$, in case where $N$ is not a square, $T$ must be a right triangle with legs having ratio $M/K$;
        \item $N = 3M^2$, in which case $T$ must be the $(\pi/2, \pi/3, \pi/6)$ right triangle.
    \end{itemize}
\end{theorem}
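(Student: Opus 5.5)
Theorem~\ref{theorem:reptile} is Snover et al.'s reptile theorem. The paper attributes it to \cite{snover1991} and will presumably cite rather than reprove it. Still, here is the proof I would give.

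\emph{Step 1 (any $N=M^2$ occurs).} Divide each side of $T$ into $M$ equal parts and draw the lines through the division points parallel to the sides. This cuts $T$ into $M^2$ triangles congruent to $T/M$.

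\emph{Step 2 (the angle constraint).} Suppose $T$ with angles $(\alpha,\beta,\gamma)$ is tiled by $N$ copies of $R$, similar to $T$ and scaled by a factor $s$. Comparing areas gives $N s^2 = 1$. At each vertex of $T$, the angle there is filled by tile angles taken from $\{\alpha,\beta,\gamma\}$. Every angle of $T$ is at least the smallest tile angle, and the three angles of $T$ sum to $\pi$. I would split into cases according to whether the angles are commensurable. If they are incommensurable, the standard linear-algebra argument over $\Q$ (Laczkovich's vertex-counting) forces the tiles at each vertex of $T$ to be matched in a similar orientation. I then expect to show that $s$ is rational, so that $N$ is a square.

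\emph{Step 3 (the commensurable case, which is the main obstacle).} Here I would use the side lengths. Each side of $T$ is a nonnegative integer combination of $sa, sb, sc$, where $(a,b,c)$ are the sides of $T$. So
\[
(a,b,c)^{\mathsf T} = s\,A\,(a,b,c)^{\mathsf T}
\]
for some nonnegative integer matrix $A$, and $1/s=\sqrt N$ is an eigenvalue of $A$ with a positive eigenvector. If the sides of $T$ are linearly independent over $\Q$, then $A=(1/s)I$, and $\sqrt N$ is an integer.

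Otherwise the sides satisfy a rational relation. Together with the law of sines and the angle conditions at the vertices of $T$, this forces one of two shapes. The first is a right triangle, where the legs $a,b$ with $a/b=M/K$ rational give $c^2=a^2+b^2$, and the dissection by the altitude together with grid subdivisions realizes $N=M^2+K^2$. The second is the $30$-$60$-$90$ triangle, whose sides involve $\sqrt3$, giving $\sqrt N\in \sqrt3\,\Q$ and hence $N=3M^2$.

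Proving that no other shape can occur is where the real case analysis lies. I would handle it by combining the eigenvalue equation with the finitely many ways the angles at the three vertices of $T$ can be filled. This step is the one I expect to be hardest, and I would defer to \cite{snover1991} for the full details of that enumeration.
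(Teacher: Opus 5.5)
\textbf{The case split in Step~2 is wrong.} (The paper gives no proof of this theorem; it only quotes \cite{snover1991}, so your sketch has to stand on its own.) You assert that when the angles of $T$ are incommensurable, the vertex analysis forces $s\in\Q$ and hence $N$ a square. That is false. Take the right triangle with legs $1$ and $2$. Its angles are $\pi/2$, $\arctan 2$ and $\arctan\frac12$, and $\arctan\frac12$ is not a rational multiple of $\pi$: at rational multiples of $\pi$, a rational tangent must be $0$ or $\pm1$. Yet the altitude cuts this triangle into two similar pieces with hypotenuses $1$ and $2$, and these are tiled by $1+4=5$ copies scaled by $1/\sqrt5$.

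More generally, every right triangle with leg ratio $M/K$, $M\neq K$, has incommensurable angles. So the entire non-square family of the second bullet lies in the branch where you claim only squares occur, while your Step~3 places it in the commensurable branch. The root cause is that ``incommensurable'' only says the angles are not all rational multiples of $\pi$. The vertex-counting argument (each corner of $T$ is filled by a single tile angle) needs $\alpha,\beta,\pi$ to be linearly independent over $\Q$. In a right triangle the relation $\alpha+\beta=\pi/2$ lets the right-angle corner be filled by one $\alpha$ and one $\beta$, which is exactly what the altitude dissection does.

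\textbf{Smaller problems in Step~3, and a better split.} The implication ``sides independent over $\Q$ $\Rightarrow$ $A=\sqrt N\,I$'' needs an argument, because $\sqrt N$ may a priori be irrational. The claim ``a rational relation among the sides forces one of two shapes'' is false as stated: the equilateral triangle is $4$-reptiled.

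A sounder organization splits on whether $N$ is a square rather than on the angles. Each of $\sqrt N\,a$, $\sqrt N\,b$, $\sqrt N\,c$ is a nonnegative integer combination of $a,b,c$. Hence the $\Q$-span $W$ of the sides satisfies $\sqrt N\,W\subseteq W$, so $W$ is a vector space over $\Q(\sqrt N)$. If $N$ is not a square, $\dim_{\Q} W$ is even, hence equals $2$, and all ratios of sides lie in $\Q(\sqrt N)$. This also settles your independent case, since $\dim_{\Q}W=3$ then forces $\sqrt N\in\Q$.

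What remains is the real content of the theorem, which your proposal defers entirely to \cite{snover1991}. You must derive from the corner conditions that such a $T$ is either a right triangle with rational leg ratio or the $30$-$60$-$90$ triangle.
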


Consequently:
\begin{theorem} \label{theorem:reptilenotprime}  
Suppose $T$ can be reptiled into $N$ tiles and $N \not\in\{2, 3\}$ is prime. 
Then $N \equiv 1 \pmod 4$, and 
$T$ is a right triangle with legs in ratio $M/K$.
\end{theorem}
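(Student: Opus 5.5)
The plan is to read the statement off Theorem~\ref{theorem:reptile} by going through its three alternatives in turn. Suppose $T$ is reptiled into $N$ tiles, where $N$ is prime and $N \notin \{2,3\}$; in particular $N \ge 5$ and $N$ is odd. Theorem~\ref{theorem:reptile} tells us that $N = M^2$, or $N = M^2+K^2$ with $T$ a right triangle whose legs are in ratio $M/K$, or $N = 3M^2$ with $T$ the $30$-$60$-$90$ triangle. The task is to rule out the first and third alternatives, and then extract the congruence $N \equiv 1 \pmod 4$ from the second.

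First I dispose of the two excluded cases. If $N = M^2$ then $M \ge 2$, since $N > 1$. A prime cannot be a perfect square greater than $1$, because $M$ would be a proper divisor of $N$, so this case is impossible. If $N = 3M^2$ then $3 \mid N$. Primality forces $N = 3$ and $M = 1$, but $N = 3$ has been excluded by hypothesis.

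This leaves the second case, $N = M^2 + K^2$, in which $T$ is a right triangle with legs in ratio $M/K$, which is exactly the geometric conclusion we want. For the congruence, note that squares are $\equiv 0$ or $1 \pmod 4$, so $M^2+K^2 \equiv 0, 1,$ or $2 \pmod 4$. Since $N$ is an odd prime, $N$ is odd, which leaves only $N \equiv 1 \pmod 4$. (Exactly one of $M, K$ is odd here.)

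There is no real obstacle in this argument; it is a direct corollary. The one point needing care is how to read Theorem~\ref{theorem:reptile}. Its second item is phrased as ``in case $N$ is not a square,'' so a prime $N$ falling under that item still carries the right-triangle conclusion. We have already shown that $N$ is not a square, so this reading applies and the conclusion about $T$ does indeed follow.
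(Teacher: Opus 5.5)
Your proposal is correct and takes the same approach as the paper's proof. Both run through the three cases of Theorem~\ref{theorem:reptile}, rule out $N=M^2$ and $N=3M^2$ using primality and $N\neq 3$, and conclude that $N=M^2+K^2$ with $T$ a right triangle whose legs are in ratio $M/K$; since $N$ is odd, $N\equiv 1 \pmod 4$. You spell out the mod-$4$ computation and the reading of the ``not a square'' clause more explicitly than the paper does.
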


\begin{remark} A 30-60-90 triangle can be 3-reptiled, and a right isosceles triangle is 2-reptiled by its altitude.
See Figure~\ref{figure:tiling13}.   
\end{remark}

\begin{proof}  Assume $N$ is a prime number.
Looking at the three cases of Theorem~\ref{theorem:reptile}, $N$ is a square, or three times a square, or 
a sum of two squares.  In the first two cases, $N$ is not prime, except when $N=3$.
  Therefore, $N$  is a sum
of two squares $M^2 + K^2$, and $T$ is a right triangle with legs in ratio $M/K$.  Therefore
$N \equiv 1 \pmod 4$, since $N \neq 2$ by hypothesis.
\end{proof}

\begin{figure}[ht]
\includegraphics[width=0.2\textwidth]{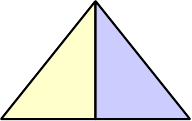}
\hskip0.5cm
\includegraphics[width=0.3\textwidth]{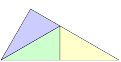}
\hskip0.5cm
\includegraphics[width=0.3\textwidth]{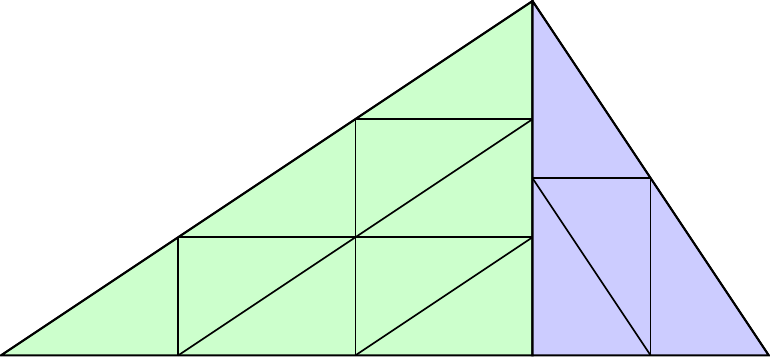}
\caption{Prime tilings}
\label{figure:tiling13}
\end{figure}

Note that if $T$ tiles into congruent copies of $R$ and $T$ has incommensurable angles, then $R$ must also have incommensurable angles; otherwise, each angle of $T$ would be equal to a sum of rational multiples of $\pi$ from the tiling, so $T$ would have commensurable angles. This allows us to use the following corollary of another result from \cite{laczkovich1995}: 

\begin{theorem}[Corollary of Theorem 4.1, \cite{laczkovich1995}]
\label{theorem:l1995-4.1}
Let a non-isosceles triangle $T$ be tiled into $R$ where $R$ has incommensurable angles $(\alpha, \beta, \gamma)$. Then one of the following must hold:
\begin{enumerate}[{\rm (i)}]
    \item the tiling is a reptiling;
    \item (Group 1): $3\alpha + 2\beta = \pi$ and $T$ has angles $(\alpha, 2\alpha, 2\beta)$ or $(2\alpha, \beta, \alpha + \beta)$; or
    \item (Group 2): $3\alpha + 3\beta = \pi$ (equivalently, $\gamma = 2\pi/3$) and $T$ has angles 
    $$(\alpha, 2\alpha, 3\beta), (\alpha, 2\beta, 2\alpha + \beta), (\alpha, \alpha+\beta, \alpha+2\beta), \text{or }  (2\alpha, 2\beta, \alpha+\beta).$$
\end{enumerate}
\end{theorem}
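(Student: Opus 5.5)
The statement is labelled a corollary of Theorem~4.1 of \cite{laczkovich1995}, so the plan is to take Laczkovich's general classification as given and reduce it to the listed cases. Theorem~4.1 says roughly this: when $R$ has incommensurable angles and the tiling of $T$ is not a reptiling, there are only two possibilities. Either $T$ is equilateral or isosceles, or the angles of $R$ satisfy one of a short list of linear relations, and the angles of $T$ are then nonnegative integer combinations of $\alpha,\beta,\gamma$ from an explicit finite list. Since $T$ is assumed non-isosceles, the isosceles alternatives drop out. Laczkovich's relation list reduces, after relabelling $\alpha\leftrightarrow\beta$ where needed, to $3\alpha+2\beta=\pi$ or $\gamma=2\pi/3$.

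The first step is therefore to quote Theorem~4.1 precisely and discard the isosceles and equilateral outcomes. The second step determines the possible angle triples of $T$ in each group. Each vertex angle of $T$ equals $p\alpha+q\beta+r\gamma$ with $p,q,r\ge 0$ integers. Because $\alpha/\pi$ is irrational, any linear relation among $\alpha$, $\beta$ and $\pi$ with rational coefficients must be a rational multiple of the defining group relation. So if we write each angle as $\pi u+ \alpha v$ with $u,v$ rational, the three $u$'s and the three $v$'s must each sum correctly.

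In Group~2, put $\beta=\pi/3-\alpha$ and $\gamma=2\pi/3$. A vertex angle $p\alpha+q\beta+r\gamma$ then equals $(q+2r)\pi/3+(p-q)\alpha$. The conditions are that the values $q+2r$ sum to $3$ over the three vertices, the values $p-q$ sum to $0$, and every angle is positive. Enumerating the nonnegative solutions is a finite check. We then discard three kinds of triples: those with two equal entries (isosceles $T$), those with angles proportional to $(\alpha,\beta,\gamma)$ (reptiling), and those for which some multiple combination cannot occur at a vertex. That last exclusion uses Laczkovich's vertex-counting lemmas, for instance that $\gamma$ cannot appear at a vertex of $T$ alone except in the reptile case. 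What should remain are the four listed shapes. Group~1 is handled the same way, with $\beta=(\pi-3\alpha)/2$, and leaves $(\alpha,2\alpha,2\beta)$ and $(2\alpha,\beta,\alpha+\beta)$. The isosceles Group~1 shapes in Table~\ref{table:Group1} are excluded by hypothesis.

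I expect the main obstacle to be the exclusion step, not the arithmetic. Pure angle bookkeeping produces extra candidate triples, for example ones containing $\gamma$ itself as an angle of $T$. Ruling these out needs the finer combinatorial content of \cite{laczkovich1995}: either the edge-and-vertex counting of the $\alpha$, $\beta$ and $\gamma$ angles, or a direct statement of Theorem~4.1 listing the shapes. I would first check whether Theorem~4.1 already lists the shapes explicitly, in which case the corollary is just a transcription. If it does not, I would supplement it with Laczkovich's counting argument, which compares the number of $\alpha$ angles with the number of $\beta$ angles in the tiling and works from the fact that each tile contributes one of each.
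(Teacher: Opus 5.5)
Your plan is essentially the paper's proof, which cites Theorem~4.1 of \cite{laczkovich1995} and deletes the isosceles and equilateral shapes from its list. Your fallback enumeration also works without the extra vertex-counting you anticipate: once the group relation is fixed, every bookkeeping solution with $\gamma$ as a vertex angle of $T$ (equivalently $2\alpha+2\beta$ in Group~2, or $2\alpha+\beta$ in Group~1) is just $T$ similar to $R$, i.e.\ case~(i), and in Group~2 the survivors are the four listed shapes up to interchanging $\alpha$ and $\beta$.
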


\begin{proof}
See \cite{laczkovich1995}, Theorem~4.1.
That theorem has more cases in the conclusion than
shown here, but does not have our hypothesis that 
$T$ is not isosceles. 
By adding in the requirement that $T$ is not isosceles, we reduce the number of cases of the original result. 
\end{proof}

\section{Some lemmas about a tile with a $2\pi/3$ angle}
Here we present some lemmas that are only about the tile,
and not about the shape of the tiled triangle.

\begin{lemma} \label{lemma:claude}
Let triangle $R$ have angles $(\alpha,\beta,2\pi/3)$ and sides $(a,b,c)$ opposite
those angles, respectively. Then
\begin{eqnarray*}
\cos \alpha &=& \frac{a+2b} {2c} \\
\sin \alpha &=& \frac {a \sqrt 3}{2c}\\
\sin 2 \alpha &=& \frac {a \sqrt 3 (a+2b)}{2c^2}\\
\sin 3 \alpha 
&=&  \frac {a \sqrt 3}{2c} \cdot\frac {3b(a+b)}{c^2}
\end{eqnarray*}
\end{lemma}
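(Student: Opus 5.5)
The plan is to derive the first two identities directly from the law of sines and the law of cosines, and then obtain the last two by the double- and triple-angle formulas, reducing everything with the one polynomial relation among $a,b,c$ forced by $\gamma = 2\pi/3$.

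First, the law of cosines at the angle $\gamma = 2\pi/3$, where $\cos\gamma = -1/2$, gives
$$ c^2 = a^2 + b^2 + ab. $$
This is the key relation used throughout.

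The law of sines gives $a/\sin\alpha = c/\sin(2\pi/3) = 2c/\sqrt 3$, hence $\sin\alpha = a\sqrt 3/(2c)$.

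For the cosine I will apply the law of cosines at angle $\alpha$:
$$\cos\alpha = \frac{b^2+c^2-a^2}{2bc}.$$
Substituting $c^2 = a^2+b^2+ab$ makes the numerator $2b^2+ab = b(a+2b)$, so $\cos\alpha = (a+2b)/(2c)$.

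As a consistency check, $\cos^2\alpha+\sin^2\alpha = \bigl((a+2b)^2+3a^2\bigr)/(4c^2) = 4(a^2+ab+b^2)/(4c^2) = 1$, as it should be.

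Next, $\sin 2\alpha = 2\sin\alpha\cos\alpha = 2\cdot\frac{a\sqrt3}{2c}\cdot\frac{a+2b}{2c}$, which is exactly the stated formula.

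For $\sin 3\alpha$ I will use $\sin 3\alpha = \sin\alpha\,(3 - 4\sin^2\alpha)$. Here
$$3 - 4\sin^2\alpha = 3 - \frac{3a^2}{c^2} = \frac{3(c^2-a^2)}{c^2} = \frac{3(b^2+ab)}{c^2} = \frac{3b(a+b)}{c^2},$$
again using $c^2 = a^2+b^2+ab$. This yields the claimed product form.

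There is no real obstacle. The only point requiring care is remembering to eliminate $c^2$ via the relation $c^2=a^2+b^2+ab$ in both the $\cos\alpha$ and $\sin3\alpha$ computations. Alternatively, one could note that $\beta = \pi/3-\alpha$ and $\sin 3\alpha = \sin(\pi - 3\alpha)$ to cross-check the result.
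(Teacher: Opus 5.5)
Your proof is correct and follows the paper's route: the law of sines for $\sin\alpha$, then the double- and triple-angle formulas, simplified with $c^2=a^2+b^2+ab$. The only difference is in $\cos\alpha$: you get it from the law of cosines at $\alpha$, whereas the paper computes $\cos^2\alpha = 1-\sin^2\alpha = (a+2b)^2/(4c^2)$ and takes a square root; your route has the small advantage of fixing the sign of $\cos\alpha$ directly, without appealing to $\alpha<\pi/2$.
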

 
\begin{proof}
By the law of sines, $\sin \alpha = (a/c) \sin (2\pi/3) = \frac {a \sqrt 3} {2c}$, as claimed.

To evaluate $\cos \alpha$: 
\begin{eqnarray*}
\cos^2 \alpha &=& 1- \sin^2 \alpha = 1- \frac {3a^2}{4c^2} = \frac{4c^2-3a^2}{4c^2} = \frac{4(a^2+b^2+ab)-3a^2}{4c^2} \\
\cos^2 \alpha  &=& \frac{a^2 + 4b^2+4ab}{4c^2} = \frac{(a+2b)^2}{4c^2} \\
\cos \alpha &=& \frac {a+2b} {2c} \mbox{\qquad as claimed}
\end{eqnarray*}
Then we have
\begin{eqnarray*}
\sin 2\alpha &=& 2 \sin \alpha \cos \alpha = 2\frac {a \sqrt 3} {2c} \cdot\frac{a+2b} {2c} = \frac {a \sqrt 3 (a+2b)}{2c^2}\\
\end{eqnarray*}
and finally 
\begin{eqnarray*}
\sin 3 \alpha &=& \sin \alpha (3 - 4 \sin^2 \alpha) \\
&=&  \frac {a \sqrt 3}{2c} \left(3 - \frac {3a^2}{c^2}\right)  = \frac {a \sqrt 3}{2c} \left( \frac{3c^2 - 3a^2}{c^2}\right)\\
&=&  \frac {a \sqrt 3}{2c} \cdot \frac {3b(a+b)}{c^2} \qedhere
\end{eqnarray*}
\end{proof}

\begin{lemma}\label{lemma:beta}
Let triangle $R$ have angles $(\alpha,\beta,2\pi/3)$ and sides $(a,b,c)$ opposite
those angles, respectively. Then
\begin{eqnarray*}
\sin \beta &=& \frac{b \sqrt 3}{2c}\\
\cos \beta &=& \frac {2a+b}{2c}\\
\sin 2 \beta &=& \frac {b \sqrt 3 (2a+b)}{2c^2}\\
\sin (2 \alpha +\beta) &=& \frac{\sqrt 3 (a+b)}{2c} \\
\sin (\alpha + 2 \beta)&=&\frac{\sqrt 3 (a+b)}{2c} 
\end{eqnarray*}
\end{lemma}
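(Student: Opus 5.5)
The plan is to reuse Lemma~\ref{lemma:claude} by symmetry and then finish with angle-addition identities. The triangle $R$ is symmetric in the roles of $(\alpha,a)$ and $(\beta,b)$: both angles sit next to the $2\pi/3$ angle, and the law of cosines $c^2 = a^2+b^2+ab$ is symmetric in $a$ and $b$. Applying Lemma~\ref{lemma:claude} with $a$ and $b$ interchanged (and $\alpha$, $\beta$ interchanged) gives immediately
\[
\sin\beta = \frac{b\sqrt 3}{2c}, \qquad \cos\beta = \frac{2a+b}{2c}, \qquad \sin 2\beta = \frac{b\sqrt3(2a+b)}{2c^2}.
\]
This settles the first three formulas. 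If one prefers not to invoke symmetry, the same computation as in Lemma~\ref{lemma:claude} goes through: use the law of sines, then $\cos^2\beta = 1-\sin^2\beta$ with $c^2=a^2+b^2+ab$, which yields $(2a+b)^2/(4c^2)$. The positive root is correct because $\beta<\pi/3$.

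For the last two formulas, the simplest route avoids expansion entirely. Since $\alpha+\beta = \pi/3$, we have $2\alpha+\beta = \pi/3+\alpha$ and $\pi - (2\alpha+\beta) = 2\pi/3 - \alpha = \pi/3 + \beta$. Hence $\sin(2\alpha+\beta) = \sin(\pi/3+\beta)$. By the same reasoning with the roles exchanged, $\sin(\alpha+2\beta) = \sin(\pi/3+\alpha)$.

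Next, $\pi/3+\alpha$ and $\pi/3+\beta$ sum to $\pi$, so their sines agree. This shows that the two quantities $\sin(2\alpha+\beta)$ and $\sin(\alpha+2\beta)$ are equal, and only one needs to be computed. Expanding with the values already obtained:
\[
\sin(\pi/3+\alpha) = \tfrac{\sqrt3}{2}\cos\alpha + \tfrac12\sin\alpha = \tfrac{\sqrt3}{2}\cdot\frac{a+2b}{2c} + \tfrac12\cdot\frac{a\sqrt3}{2c} = \frac{\sqrt3(2a+2b)}{4c} = \frac{\sqrt3(a+b)}{2c}.
\]

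There is no real obstacle here. The only points needing care are the sign choice for $\cos\beta$ and keeping straight which angle is $\pi/3$ plus which. An alternative check for the last formula is geometric: $\sin(\pi/3+\beta) = \sin(\pi - \pi/3 - \beta)$, and the side-length sum $a+b$ appears naturally via the law of sines, $a+b = \frac{2c}{\sqrt3}(\sin\alpha+\sin\beta)$. Combined with the sum-to-product identity $\sin\alpha+\sin\beta = 2\sin\frac{\pi}{6}\cos\frac{\alpha-\beta}{2}$, this gives a consistency check on the computation.
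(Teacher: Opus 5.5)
Your proposal is correct and follows essentially the same route as the paper. The first three formulas come from the $a\leftrightarrow b$, $\alpha\leftrightarrow\beta$ symmetry of Lemma~\ref{lemma:claude}; then $\sin(2\alpha+\beta)$ is expanded as $\frac{\sqrt3}{2}\cos\alpha+\frac12\sin\alpha$, and $\sin(\alpha+2\beta)=\sin(2\alpha+\beta)$ because the two angles are supplementary. Your closing sum-to-product remark is a valid extra check but is not needed.
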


\begin{proof}
The first three formulas can be obtained by switching the labels $a$ and $b$, and $\alpha$ and $\beta$, in Lemma~\ref{lemma:claude}, 
which is legal since the requirement $3\alpha + 3\beta = \pi$ is invariant under that switch.  Then,
$$ \sin(2\alpha + \beta) = \sin(2\pi/3 -\alpha)= \frac{\sqrt 3} 2 \cos \alpha + \frac 1 2 \sin \alpha  = \sqrt 3 \frac {a+b}{2c}. $$
Finally, $\sin(\alpha+2\beta) = \sin(2\alpha+\beta)$, since the sum of those two angles is $\pi$.
\end{proof}

\begin{lemma} \label{lemma:pairwisecoprimality} Suppose $c^2 = a^2 + b^2 + ab$ and 
$(a,b,c)$ have no common divisor.  Then $(a,b,c)$ are pairwise relatively prime.
\end{lemma}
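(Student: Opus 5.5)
We argue by contradiction, prime by prime. Suppose some prime $p$ divides two of $a,b,c$. We will show that $p$ then divides the third, contradicting the hypothesis that $(a,b,c)$ have no common divisor. The equation $c^2 = a^2+b^2+ab$ lets us reduce everything modulo $p$ (indeed modulo $p^2$). There are three cases, according to which pair $p$ divides.

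\emph{Case $p\mid a$ and $p\mid b$.} Then $p^2$ divides each of $a^2$, $b^2$, and $ab$. Hence $p^2\mid c^2$, so $p\mid c$.

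\emph{Case $p\mid a$ and $p\mid c$.} Reducing the equation modulo $p$ gives $0\equiv b^2 \pmod p$. Since $p$ is prime, $p\mid b$.

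\emph{Case $p\mid b$ and $p\mid c$.} This is symmetric to the previous case, because the equation is invariant under swapping $a$ and $b$. Reducing modulo $p$ gives $a^2\equiv 0\pmod p$, so $p\mid a$.

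In every case $p$ divides all of $a,b,c$, which is the required contradiction. So $(a,b,c)$ are pairwise relatively prime.

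There is no real obstacle here. The only point needing care is that "no common divisor" means $\gcd(a,b,c)=1$, and that the argument uses only the primality of $p$ (through $p\mid x^2\Rightarrow p\mid x$). No sign or positivity assumptions on $a,b,c$ are needed, since only divisibility is used.
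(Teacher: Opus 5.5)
Your proof is correct and follows essentially the same route as the paper. In both, a prime dividing two of $a,b,c$ is shown, by reducing $c^2=a^2+b^2+ab$ modulo $p$, to divide the third, and the case $p\mid b$, $p\mid c$ follows from the $a\leftrightarrow b$ symmetry; your $p^2\mid c^2$ in the first case is more than needed, since $p\mid c^2$ already gives $p\mid c$.
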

\begin{proof}  Suppose $p$ is prime and $p \mid a$ and $p \mid b$.  Then $p \mid a^2 + b^2 + ab$, so
$p \mid c$, so $p$ divides all three of $(a,b,c)$, contradicting the hypothesis.
  Suppose instead that $p \mid a$ and $p \mid c$.  Then $p \mid c^2 -a^2 - ab = b^2$,
so $p \mid b$, again contradicting the hypothesis.  Similarly with $a$ and $b$ switched. 
\end{proof}

\begin{lemma} \label{lemma:BLZ30} Suppose $c^2 = a^2 + b^2 + ab$ and 
$(a,b,c)$ have no common divisor. 
$$ \gcd(c^2,c(a+2b),3b(a+b)) = 1.$$
\end{lemma}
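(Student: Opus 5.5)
Suppose a prime $p$ divides all three quantities $c^2$, $c(a+2b)$, and $3b(a+b)$; we derive a contradiction. By Lemma~\ref{lemma:pairwisecoprimality}, $(a,b,c)$ are pairwise relatively prime, and this is the main tool throughout. Since $p \mid c^2$, we have $p \mid c$. Then $p \nmid a$ and $p \nmid b$. From $p \mid 3b(a+b)$ and $p\nmid b$, it follows that $p \mid 3(a+b)$. The argument therefore splits according to whether $p=3$ or $p\neq 3$.

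\emph{Case $p \neq 3$.} Here $p \mid a+b$, so $b \equiv -a \pmod p$. Substituting into $c^2 = a^2+b^2+ab$ gives
$$0 \equiv c^2 \equiv a^2 + a^2 - a^2 = a^2 \pmod p,$$
so $p \mid a$. This contradicts $\gcd(a,c)=1$. Note that this case never uses the middle term $c(a+2b)$, since it is automatically divisible by $p$.

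\emph{Case $p = 3$.} Now $3 \mid c$, and $3\nmid a$, $3 \nmid b$. Work modulo $3$, where $a,b \in \{\pm1\}$. The form satisfies $a^2+ab+b^2 \equiv (a-b)^2 \pmod 3$, because $ab \equiv -2ab$. Since $3 \mid c^2$, we get $a \equiv b \pmod 3$, so $a^2+ab+b^2 \equiv 3a^2 \equiv 3 \pmod 9$. To see this, write $b = a+3t$. Then
$$a^2+ab+b^2 = 3a^2 + 9at + 9t^2 \equiv 3a^2 \pmod 9,$$
and $3a^2 \not\equiv 0 \pmod 9$ because $3 \nmid a$. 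But $3\mid c$ forces $9 \mid c^2$, a contradiction.

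Hence no prime divides all three numbers, and the gcd is $1$. The only step needing care is the $p=3$ case: the factor $3$ in $3b(a+b)$ means we cannot simply conclude $p\mid a+b$, so we must instead show directly that $3\nmid c$. That is done by the mod-$9$ computation above.
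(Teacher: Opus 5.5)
Your proof is correct. The paper gives no argument of its own for this lemma; it only cites Lemma~30 of \cite{BLZ2026}. So your proof is a self-contained alternative rather than a reconstruction of the paper's proof.

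It relies only on Lemma~\ref{lemma:pairwisecoprimality}, and both cases reuse ideas the paper develops elsewhere:
\begin{itemize}
\item Your $p=3$ case is essentially the paper's later Lemma~\ref{lemma:not3divc}. That lemma does not depend on the present one, so you could simply cite it.
\item Your $p\neq 3$ case is the same mod-$p$ substitution the paper carries out inside the proofs of Theorems~\ref{theorem:case3} and~\ref{theorem:case5}.
\end{itemize}

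The citation buys brevity. Your route buys self-containment and a slightly sharper statement. Since $p\mid c$ already forces $p\mid c(a+2b)$, the middle term is never used in either case, not just when $p\neq 3$. You have therefore shown that $\gcd(c,3b(a+b))=1$. This makes it transparent why the triple $(c^2,c(a+2b),3b(a+b))$ in the proof of Theorem~\ref{theorem:case2} is primitive.
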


\begin{proof}  This is proved in \cite{BLZ2026}, Lemma~30.
\end{proof}

\begin{lemma} \label{lemma:claude3}
Suppose triangle $T$ is tiled by a triangle $R$ with integer sides.
 Let the side lengths 
$(X,Y,Z)$ of $T$ be proportional to coprime integers $(X_0, Y_0, Z_0)$.
Then $(X,Y,Z) = \lambda (X_0,Y_0,Z_0)$ for some integer $\lambda$.
\end{lemma}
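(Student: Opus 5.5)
Write $(X,Y,Z) = \lambda (X_0,Y_0,Z_0)$ for some real $\lambda>0$; this is possible because the sides are proportional to $(X_0,Y_0,Z_0)$. The task is to show $\lambda$ is an integer. The input from the tiling is that each side of $T$ is a union of tile edges, so each side length is a nonnegative integral combination of $a,b,c$. Since $a,b,c$ are integers, $X$, $Y$, $Z$ are all integers, and this is the only geometric fact the proof needs.

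The rest is elementary number theory. Since $\gcd(X_0,Y_0,Z_0)=1$, Bézout gives integers $u,v,w$ with $uX_0+vY_0+wZ_0 = 1$. Multiplying by $\lambda$,
$$\lambda = uX + vY + wZ,$$
which is an integer combination of integers, so $\lambda\in\Z$.

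If one prefers to avoid Bézout, there is a second route. First, $\lambda = X/X_0$ is rational; write it in lowest terms as $p/q$. Then $q \mid pX_0$, $q\mid pY_0$ and $q\mid pZ_0$, and since $\gcd(p,q)=1$, $q$ divides all of $X_0,Y_0,Z_0$, forcing $q=1$.

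The only point needing care is the claim that each side of $T$ has integer length. A side of $T$ is partitioned into segments, each a full edge of some tile, so its length is a sum of numbers from $\{a,b,c\}$. Vertices of tiles may lie in the interior of edges of other tiles, but that does not affect the boundary of $T$: the boundary is covered by edges of tiles lying along it. Each such tile edge lies entirely within one side of $T$, because a tile edge is a straight segment and cannot turn a corner of $T$. So the edges lying along a side of $T$ partition that side, and its length is a sum of elements of $\{a,b,c\}$. This is the main step to state carefully, though it is standard in the subject.
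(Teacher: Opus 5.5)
Your proposal is correct and is essentially the paper's argument: the tiling makes $X,Y,Z$ integers, and then $\gcd(X_0,Y_0,Z_0)=1$ forces $\lambda\in\Z$. Your second route ($\lambda=p/q$ in lowest terms, then $q$ divides $X_0,Y_0,Z_0$) is exactly the paper's proof, while the B\'ezout identity $\lambda=uX+vY+wZ$ is an equivalent one-line shortcut that skips the rationality step; your justification that each side of $T$ is partitioned into full tile edges is correct and spells out what the paper only asserts.
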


\begin{proof} Since there is a tiling, each side length of $T$ is a 
linear integral combination of $(a,b,c)$, which are integers by 
hypothesis.  Hence $(X,Y,Z)$ are integers.  By hypothesis,
$$ (X,Y,Z) = \lambda (X_0,Y_0,Z_0)$$
for some $\lambda$.  Then $\lambda$ is rational, since $X$ and $X_0$
are integers.  Let $p,q$ be coprime positive integers with $\lambda = p/q$.
We have 
$$ q (X,Y,Z) = p (X_0,Y_0,Z_0).$$
Since $p$ and $q$ are coprime, $q$ divides each of $(X_0$, $Y_0$, $Z_0)$.  But 
by hypothesis, these three have no common factor.  Hence $q = 1$ and $\lambda = p$
is an integer. 
\end{proof}

\begin{lemma} \label{lemma:claude4} Let  $(a,b,c)$ be positive integers
satisfying $c^2 = a^2 + ab + b^2$.  Then $a+b$ is composite.
\end{lemma}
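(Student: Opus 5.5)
The plan is to argue by contradiction using the identity $c^2=(a+b)^2-ab$, reduced modulo $a+b$. Suppose $p=a+b$ is prime; the aim is to pin $c$ down to one of $a$ or $b$ and then use that $c$ is strictly larger than both. First note that $a+b\ge 2$, so $a+b$ is not $1$. Also $a+b$ cannot be $2$, since then $a=b=1$ and $c^2=3$, which is not a perfect square.

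The key observation is that $b\equiv -a \pmod p$. Substituting into $c^2=a^2+ab+b^2$ gives
$$ c^2 \equiv a^2 - a^2 + a^2 = a^2 \pmod p. $$
Since $p$ is prime, this forces $c\equiv \pm a \pmod p$.

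Next, bound $c$. Because $ab>0$, we have $c^2=(a+b)^2-ab<(a+b)^2$, so $0<c<p$. Likewise $0<a<p$. Two residues in the range $(0,p)$ that are congruent mod $p$ must be equal. So $c\equiv a$ gives $c=a$, and $c\equiv -a$ gives $c=p-a=b$.

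Both options are impossible, because $c^2=a^2+ab+b^2$ exceeds both $a^2$ and $b^2$, so $c>a$ and $c>b$. Hence $p$ is not prime, and since $p\ge 2$, $a+b$ is composite.

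I do not expect a real obstacle here. The only points needing care are the small cases ($a+b\le 2$) and the strict inequalities $a,b<c<a+b$, which are what let the congruence determine $c$ exactly.
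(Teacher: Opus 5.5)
Your proof is correct and follows essentially the same route as the paper: reduce $c^2=(a+b)^2-ab$ modulo the prime $p=a+b$ to get $c\equiv \pm a \pmod p$, then use $0<a,c<p$ to force $c=a$ or $c=b$, and rule out both. The differences are cosmetic. The paper phrases the final contradiction as $b(a+b)=0$ or $a(a+b)=0$ rather than as $c>a$ and $c>b$. Your separate check of $a+b=2$ is harmless but unnecessary, since the general argument already covers that case.
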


\begin{proof}  Suppose $p := a+b$ is prime.  Then 
$c^2 = p^2 - ab$, and  we have
\begin{eqnarray*}
\tfrac 3 4 p^2 \le c^2 < p^2   &&  \mbox{\qquad since 
$0< p^2-c^2 = ab \le \frac {(a+b)^2} 4 = p^2/4$}\\
p/2 < c < p &&   \mbox{\qquad since $c^2 = p^2-ab$}\\
b \equiv -a \pmod p   &&\mbox{\qquad since $p:= a+b$}\\
c^2 \equiv p^2-ab \equiv a^2 \pmod p && \mbox{\qquad since $b \equiv -a$} \\
c^2 - a^2 \equiv 0 \equiv (c-a)(c+a)  && \mbox{\qquad since $c^2 \equiv a^2$}\\
p | (c-a)(c+a)  && \\
|c-a| < p  &&  \mbox{\qquad since $0 < a < p$ and $0 < c < p$} \\
c\neq a && \mbox{\qquad since if $c=a$ then $b(a+b)=0$, which is false}\\
p \nmid (c-a)  && \mbox{\qquad by the preceding lines}\\
p | (c+a)     && \mbox{\qquad since $ p | (c-a)(c+a)$ but $p \nmid (c-a)$}\\
c+a = p = a+b   && \mbox{\qquad since $0 < c+a < 2p$}\\
c = b    && \\
a(a+b) = 0  && \mbox{\qquad since $c^2 = a^2 + ab + b^2$ and $c=b$} 
\end{eqnarray*}
But that is impossible, since $a$ and $b$ are positive.
\end{proof}

\begin{lemma} \label{lemma:not3divc} Let $(a,b,c)$ have no common factor
and satisfy $c^2 = a^2 + ab + b^2$.  Then $3 \nmid c$.
\end{lemma}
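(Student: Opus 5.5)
We argue by contradiction, reducing the equation $c^2 = a^2+ab+b^2$ modulo $3$ and then modulo $9$. Suppose $3 \mid c$. Then $a^2+ab+b^2 \equiv 0 \pmod 3$. Since $a^2+ab+b^2 = (a-b)^2 + 3ab$, this gives $(a-b)^2 \equiv 0 \pmod 3$, hence $a \equiv b \pmod 3$.

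If $3 \mid a$, then also $3 \mid b$, so $3$ divides all of $(a,b,c)$. This contradicts the hypothesis; equivalently, it contradicts Lemma~\ref{lemma:pairwisecoprimality}. So we may assume $3 \nmid a$, and therefore $3 \nmid b$ and $a \equiv b \not\equiv 0 \pmod 3$.

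Now work modulo $9$. Write $b = a + 3k$ for some integer $k$. Then
$$a^2+ab+b^2 = 3a^2 + 9ak + 9k^2 \equiv 3a^2 \pmod 9.$$
On the other hand, $3 \mid c$ gives $c^2 \equiv 0 \pmod 9$. Hence $3a^2 \equiv 0 \pmod 9$, so $3 \mid a^2$ and $3 \mid a$, contradicting $3 \nmid a$. Therefore $3 \nmid c$.

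The only step that needs care is the passage from modulus $3$ to modulus $9$. The mod-$3$ condition alone is consistent, since $a \equiv b \equiv 1$ gives $a^2+ab+b^2 \equiv 3 \equiv 0 \pmod 3$, so no contradiction can come from reducing modulo $3$ only. The substitution $b = a+3k$ shows that the form then equals exactly $3$ times a unit modulo $9$, and this is incompatible with $c^2$ being divisible by $9$.
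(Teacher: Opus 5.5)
Your proof is correct and takes the same route as the paper: reduce modulo $3$ to get $a \equiv b \pmod 3$, substitute $b = a + 3k$ to see that $a^2+ab+b^2$ is $3$ times a unit modulo $9$, and contradict $9 \mid c^2$. The only differences are cosmetic: you use the identity $a^2+ab+b^2 = (a-b)^2 + 3ab$ instead of the paper's residue-by-residue check, and you get $3 \nmid a$ directly from the no-common-factor hypothesis, whereas the paper gets $3 \nmid b$ from Lemma~\ref{lemma:pairwisecoprimality}.
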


\begin{proof}
By Lemma~\ref{lemma:pairwisecoprimality}, $\gcd(b,c) = 1$. 
Suppose $3 | c$. Then $9 | c^2 = a^2 + ab+b^2$.   Then $a^2 + ab + b^2 \equiv 0 \pmod 3$.
We will show $ a \equiv b \pmod 3$.  If $a \equiv 0$, then $0 \equiv a^2 + ab + b^2 \equiv b^2$, so $b \equiv 0 \equiv a$.
If $a \equiv 1$, then $0 \equiv a^2 + ab + b^2 = 1+ b + b^2$.  Then $b \equiv 1 \equiv a$.  If $a=2$ then 
$0 \equiv 1 + 2b + b^2$, which implies $b \equiv 2 \equiv a$.  Therefore $a \equiv b$, as claimed.   Therefore
there exists an integer $t$ such that $b = a + 3 t$.   Then 
$$ a^2 + ab + b^2 \equiv 3 b^2 + 9 bt + 9 t^2  \equiv 3b^2 \pmod 9.$$
But $3b^2$ is nonzero (mod 9), since $3 | c$ and $\gcd(b,c) = 1$. Therefore $9 \nmid c^2$, contradiction.
\end{proof}

\section{The case $(\alpha,2\alpha,3\beta)$}

\begin{lemma}\label{lemma:rationality3}
Suppose triangle $T$ with incommensurable angles is tiled using triangle $R$ with angles $(\alpha, \beta, 2\pi/3)$.
Suppose $T$ is not similar to $R$.
Then the side lengths $(a,b,c)$ of $R$ are commensurable.
\end{lemma}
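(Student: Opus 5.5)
The plan is to apply Theorem~\ref{theorem:rationality}. It requires three hypotheses. First, $R$ is not similar to $T$; this is given. Second, $R$ has incommensurable angles. Third, $R$ is not a right triangle.

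The second hypothesis follows from Lemma~\ref{lemma:RandT}. We need $T$ not equilateral, and this is automatic: an equilateral triangle has commensurable angles, whereas $T$ is assumed to have incommensurable angles. Alternatively, use the remark preceding Theorem~\ref{theorem:l1995-4.1}. If the angles of $R$ were rational multiples of $\pi$, then each angle of $T$ would be a sum of tile angles meeting at a vertex. Each angle of $T$ would then be a rational multiple of $\pi$, contradicting the hypothesis.

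The third hypothesis is immediate. $R$ has an obtuse angle $\gamma = 2\pi/3$. Since the angles sum to $\pi$, we get $\alpha+\beta = \pi/3$, so both $\alpha$ and $\beta$ are less than $\pi/2$. Hence no angle of $R$ equals $\pi/2$, and $R$ is not a right triangle.

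With all three hypotheses verified, Theorem~\ref{theorem:rationality} gives that $(a,b,c)$ are commensurable. I expect no real obstacle here, since everything reduces to checking hypotheses. The only point needing care is that Lemma~\ref{lemma:RandT} excludes equilateral $T$. This is covered either by the incommensurability of $T$'s angles or by the direct argument from vertex angle sums.

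If one wanted a self-contained argument instead, one could use the specific shape $(\alpha,2\alpha,3\beta)$. By the law of sines, the sides of $T$ are proportional to $\sin\alpha$, $\sin 2\alpha$ and $\sin 3\beta$. Lemmas~\ref{lemma:claude} and~\ref{lemma:beta} express these sines as $\sqrt3/2$ times rational functions of $a,b,c$. Each side of $T$ is also an integral combination of $a,b,c$. Comparing the two expressions would force linear relations that yield commensurability. That route is heavier, so I would rely on Theorem~\ref{theorem:rationality}.
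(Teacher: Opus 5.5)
Your proof is correct and is the paper's argument: you verify the three hypotheses of Theorem~\ref{theorem:rationality}. The angle $\gamma=2\pi/3$ rules out a right angle, the incommensurability of $T$'s angles rules out an equilateral $T$ so that Lemma~\ref{lemma:RandT} applies, and non-similarity is given. Your closing ``self-contained'' sketch is unnecessary and would only handle the shape $(\alpha,2\alpha,3\beta)$, whereas the lemma covers arbitrary $T$. Since you do not rely on it, it does not affect the proof.
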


\begin{proof}
$R$ is not a right triangle, since $\gamma = 2\pi/3$ and $\alpha, \beta < \pi/3$.
$T$ is not equilateral, since its angles are incommensurable.
$R$ has incommensurable angles, by Lemma \ref{lemma:RandT}.
$R$ is not similar to $T$, by hypothesis.  Then by Theorem~\ref{theorem:rationality},
$(a,b,c)$ are commensurable.
\end{proof}

\begin{theorem} \label{theorem:case2}  Let triangle $T$ have incommensurable angles 
$(\alpha, 2\alpha, 3\beta)$, and suppose $T$ is $N$-tiled  by $(\alpha,\beta,2\pi/3)$.
Then $N$ is not a prime.  Moreover,
$$ N =  3\lambda^2 (a+2b)  (a+b),$$
where $(a,b,c)$ are the sides of the tile with no common factor and $\lambda \in \Z$.
\end{theorem}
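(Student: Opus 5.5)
The plan is to compute the side lengths of $T$ via the law of sines using Lemmas~\ref{lemma:claude} and~\ref{lemma:beta}, normalize them to coprime integers, and then compare areas to express $N$.

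First we put ourselves in the integer setting. $T$ is not similar to $R$: otherwise some angle of $T$ would equal $2\pi/3$. Since $\alpha,\beta<\pi/3$, that angle would have to be $3\beta$, making $\beta$ a rational multiple of $\pi$, contrary to incommensurability. So Lemma~\ref{lemma:rationality3} applies, and $(a,b,c)$ are commensurable. Rescaling, we assume $(a,b,c)$ are integers with no common factor and $c^2=a^2+ab+b^2$; the number of tiles is unchanged by scaling. Then Lemma~\ref{lemma:pairwisecoprimality} gives pairwise coprimality.

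Next we compute the shape of $T$. By the law of sines, its sides $(X,Y,Z)$, opposite $(\alpha,2\alpha,3\beta)$, are proportional to $(\sin\alpha,\sin 2\alpha,\sin 3\beta)$. Lemma~\ref{lemma:claude} gives $\sin\alpha$ and $\sin 2\alpha$. Applying Lemma~\ref{lemma:claude} with the roles of $(a,\alpha)$ and $(b,\beta)$ swapped, as in the proof of Lemma~\ref{lemma:beta}, gives
\[
\sin 3\beta = \frac{b\sqrt3}{2c}\cdot\frac{3a(a+b)}{c^2}.
\]
Multiplying all three sines by $2c^3/(a\sqrt3)$, the sides are proportional to
\[
(X_0,Y_0,Z_0) = \bigl(c^2,\ c(a+2b),\ 3b(a+b)\bigr).
\]
By Lemma~\ref{lemma:BLZ30} these three integers have no common factor. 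Lemma~\ref{lemma:claude3} then gives $(X,Y,Z)=\lambda(X_0,Y_0,Z_0)$ with $\lambda\in\Z$.

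Then we equate areas. The angle between sides $X$ and $Y$ is $3\beta$, so
\[
\mathrm{Area}(T)=\tfrac12\lambda^2 c^2\cdot c(a+2b)\cdot\sin3\beta=\tfrac{3\sqrt3}{4}\lambda^2 ab(a+b)(a+2b),
\]
while one tile has area $\tfrac12 ab\sin(2\pi/3)=\tfrac{\sqrt3}{4}ab$. Dividing gives $N=3\lambda^2(a+2b)(a+b)$, which is the claimed formula.

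Finally, non-primality is immediate: $3\mid N$, and since $a,b\ge1$ and $\lambda\ge1$ we have $N\ge 3\cdot3\cdot2=18>3$, so $N$ is not prime. (Lemma~\ref{lemma:claude4} is not even needed here.) I expect no serious obstacle. The only points requiring care are verifying the hypotheses of Lemma~\ref{lemma:rationality3}, namely that $T$ is not similar to $R$, and getting the $\sin3\beta$ expression right, so that the coprime triple matches exactly the one in Lemma~\ref{lemma:BLZ30}.
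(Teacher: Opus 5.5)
Your proposal is correct and follows essentially the same route as the paper: the law of sines gives $(X,Y,Z)=\lambda\,(c^2,\,c(a+2b),\,3b(a+b))$, Lemma~\ref{lemma:BLZ30} together with Lemma~\ref{lemma:claude3} makes $\lambda$ an integer, and the area equation yields $N=3\lambda^2(a+2b)(a+b)$. The only cosmetic differences are that you compute $\sin 3\beta$ via the $a\leftrightarrow b$ swap instead of using $\sin 3\beta=\sin 3\alpha$ (both give the same value), and that you make the non-primality explicit with the bound $N\ge 18$.
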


\begin{proof} $T$ has no $2\pi/3$ angle, since if $3\beta = 2\pi/3$ then $\beta = 2\pi/9$,
violating the incommensurable angles hypothesis; and if $2\alpha = 2\pi/3$ then $\alpha = \pi/3$,
also violating the incommensurable angles hypothesis; and if $\alpha = 2\pi/3$ then $2\alpha > \pi$
could not be an angle of $T$.
Since $T$ has no $2\pi/3$ angle, it is not similar to the tile.
By Lemma~\ref{lemma:rationality3}, $(a,b,c)$ are commensurable.
Without loss of generality, we may assume they are integers with no common factor.
Then the side lengths $(X,Y,Z)$ are also integers, since they are integral linear
combinations of $(a,b,c)$. 
Let the sides of $T$ next to the $3\beta$ angle be $X$ and $Y$. 
By the law of sines,
$$ X: {\sin \alpha} =   Y :{\sin 2 \alpha} = Z :{\sin 3 \beta}.$$
Since $\pi = 3 \alpha + 3\beta$, we have $\sin 3\beta = \sin 3 \alpha$.   Therefore
$$ X: {\sin \alpha} =   Y :{\sin 2 \alpha} = Z : {\sin 3 \alpha}.$$
By Lemma~\ref{lemma:claude},
$$ X : \frac {a \sqrt 3}{2c} = Y:\frac {a \sqrt 3 (a+2b)}{2c^2} = Z: \frac {a \sqrt 3}{2c} \cdot \frac{3b(a+b)}{c^2}.$$
Dividing by $a \sqrt{3}/2$ and multiplying by $c^3$ we have
$$ X : c^2 = Y: c(a+2b) = Z:  {3b(a+b)}.$$
That is, for some real number $\lambda$, we have 
\begin{equation}
 (X,Y,Z) = \lambda(c^2,c(a+2b),3b(a+b)). \label{eq:421}
 \end{equation}
Since $X$ and $c$ are positive integers, $\lambda$ is a positive rational.
By Lemma~\ref{lemma:BLZ30}, $\gcd(c^2,c(a+2b),3b(a+b)) = 1$. 
By Lemma~\ref{lemma:claude3}, $\lambda$ is a positive integer.
(This is the key step of the proof.)

 The area equation expresses that the area of $T$ is $N$ times the area of the tile:
\begin{equation*}
XY \sin 3\beta = N ab \sin \gamma \ = \ N ab \frac {\sqrt 3} 2  
\end{equation*}

Substitute these values for $X$ and $Y$ from (\ref{eq:421}) into the area equation.
We obtain
\begin{eqnarray*}
XY \sin 3\beta &=& \ N ab \frac {\sqrt 3} 2  \\
\lambda^2 c^3(a+2b)\sin 3 \beta &=& Nab \frac{\sqrt 3} 2
\end{eqnarray*}
Since $3\alpha+3\beta =\pi$, we have $\sin 3 \beta = \sin 3 \alpha$.  Therefore
\begin{eqnarray*}
\lambda^2 c^3(a+2b)\sin 3 \alpha &=& Nab \frac{\sqrt 3} 2
\end{eqnarray*}
Substituting for $\sin 3 \alpha$ from Lemma~\ref{lemma:claude}, we have
\begin{eqnarray*}
\lambda^2 c^3(a+2b) \cdot \frac {a \sqrt 3}{2c} \cdot \frac {3b(a+b)}{c^2} &=& Nab \frac{\sqrt 3} 2 \\
 3\lambda^2 (a+2b)  (a+b) &=& N
\end{eqnarray*}
Since $\lambda$ is an integer and $a$ and $b$ are positive integers, we have here a factorization of $N$.
Hence $N$ is not prime.
\end{proof}

\section{The case $(\alpha,2\beta,2\alpha+\beta)$}

\begin{theorem} \label{theorem:case3}  Let triangle $T$ have incommensurable angles 
$(\alpha, 2\beta, 2\alpha + \beta)$, and suppose $T$ is $N$-tiled by $(\alpha,\beta,2\pi/3)$.
Then $N$ is not a prime.  Moreover,
$$ N = \lambda^2 (2a+b) (a+b),$$
where $(a,b,c)$ are the sides of the tile with no common factor, and $\lambda \in \Z$.
\end{theorem}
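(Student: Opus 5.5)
The proof follows the proof of Theorem~\ref{theorem:case2}: use the law of sines with Lemmas~\ref{lemma:claude} and~\ref{lemma:beta}, show the scaling factor is an integer, then read off $N$ from the area equation.

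\emph{Reduction to integer sides.} First I check that $T$ has no $2\pi/3$ angle, so that $T$ is not similar to $R$.
\begin{itemize}
\item $\alpha<\pi/3$, so $\alpha\neq 2\pi/3$.
\item $2\beta=2\pi/3$ would force $\beta=\pi/3$, which is rational.
\item $2\alpha+\beta=2\pi/3$, together with $\alpha+\beta=\pi/3$, would force $\alpha=\pi/3$, again rational.
\end{itemize}
Both rational cases contradict incommensurability. Lemma~\ref{lemma:rationality3} then gives commensurable $(a,b,c)$, and I scale them to integers with no common factor.

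\emph{Law of sines.} Let $X,Y,Z$ be the sides of $T$ opposite $\alpha$, $2\beta$, $2\alpha+\beta$ respectively. The needed sines are
$$\sin\alpha=\frac{a\sqrt3}{2c},\qquad \sin2\beta=\frac{b\sqrt3(2a+b)}{2c^2},\qquad \sin(2\alpha+\beta)=\frac{\sqrt3(a+b)}{2c}.$$
Multiplying through by $2c^2/\sqrt3$ gives
$$(X,Y,Z)=\lambda\bigl(ac,\; b(2a+b),\; c(a+b)\bigr)$$
for some positive real $\lambda$.

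\emph{Key step: the triple is coprime.} Suppose a prime $p$ divides all three entries. The entries are pairwise coprime by Lemma~\ref{lemma:pairwisecoprimality}.
\begin{itemize}
\item If $p\mid a$, then $p\nmid c$, so $p\mid a+b$. Hence $p\mid b$, which is a contradiction.
\item Otherwise $p\mid c$, so $p\nmid b$, and therefore $p\mid 2a+b$. Then $b\equiv -2a \pmod p$ gives $c^2\equiv a^2-2a^2+4a^2=3a^2 \pmod p$. Since $p\mid c$ and $p\nmid a$, this forces $p=3$, which contradicts Lemma~\ref{lemma:not3divc}.
\end{itemize}
So the gcd is $1$. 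Lemma~\ref{lemma:claude3} then shows that $\lambda$ is a positive integer.

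\emph{Area equation.} I compute the area of $T$ using the two sides $Y,Z$ adjacent to the angle $\alpha$:
$$\tfrac12 YZ\sin\alpha = N\cdot\tfrac12 ab\cdot\tfrac{\sqrt3}{2}.$$
Substituting $Y$, $Z$ and $\sin\alpha$ gives
$$\lambda^2 b(2a+b)c(a+b)\cdot\frac{a\sqrt3}{2c}=N\,ab\,\frac{\sqrt3}{2},$$
that is,
$$N=\lambda^2(2a+b)(a+b).$$
Both factors $2a+b$ and $a+b$ are integers at least $2$, so $N$ is composite.

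I expect the coprimality step to be the main obstacle. It is the one place that needs Lemma~\ref{lemma:not3divc}, since the prime $3$ is exactly what that lemma excludes.
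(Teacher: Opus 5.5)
Your proof is correct and follows the paper's argument essentially step for step: the same triple $(ac,\,b(2a+b),\,c(a+b))$, the same reduction mod $p$ to $p=3$ that Lemma~\ref{lemma:not3divc} excludes, and the same area equation, except that you use the angle $\alpha$ with sides $Y,Z$ where the paper uses the angle $2\alpha+\beta$ with sides $X,Y$. One wording fix: Lemma~\ref{lemma:pairwisecoprimality} says $a,b,c$ are pairwise coprime, not the three entries (two of which share the factor $c$), and that coprimality of $a,b,c$ is all your case analysis actually uses.
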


\begin{proof}  Since $\alpha, \beta < \pi/3$,  neither $\alpha$ nor $2\beta$ is equal to $2\pi/3$.
If $2\alpha + \beta = 2\pi/3$ then $\alpha = \pi/3$, since $\alpha + \beta = \pi/3$; but that
violates the incommensurable angles hypothesis.  Hence $T$ has no $2\pi/3$ angle. 
Therefore $T$ is not similar to the tile.
By Lemma~\ref{lemma:rationality3}, $(a,b,c)$ are commensurable.
Without loss of generality, we may assume they are integers with no common factor.
Then the side lengths $(X,Y,Z)$ are also integers, since they are integral linear
combinations of $(a,b,c)$. 
Let the sides of $T$ next to the $2\alpha + \beta$ angle be $X$ and $Y$.  
We will show that $(X,Y,Z)$  are proportional to a triple with
no common factor, apply Lemma~\ref{lemma:claude3} to conclude
that the proportionality factor is an integer, 
and then use the area equation to finish.   Here are the details:

By the law of sines,
$$ X: {\sin \alpha} =   Y :{\sin 2 \beta} = Z :{\sin( 2\alpha + \beta)}.$$
Using the formulas from Lemmas~\ref{lemma:claude} and~\ref{lemma:beta}, we have 

$$ X: \frac {a \sqrt 3}{2c} = Y:\frac {b \sqrt 3 (2a+b)}{2c^2} = Z: \frac{\sqrt 3 (a+b)}{2c}.$$
$$ X: ac = Y : b(2a+b) = Z: (a+b)c.$$
Then for some real $\lambda$ we have
\begin{equation}
 (X,Y,Z) = \lambda (ac, b(2a+b), (a+b)c). \label{eq:518}
 \end{equation}
We will prove $(ac,b(2a+b),(a+b)c)$ have no common factor. 
We have $c^2 = a^2 + b^2 + ab$; by Lemma~\ref{lemma:pairwisecoprimality},
we have $\gcd(a,b) = 1$. 
Suppose the prime $p$ divides all three of $(ac,b(2a+b),(a+b)c)$.
  Then $p | ac$, so 
$p | a$ or $p | c$.  Then $p \nmid b$.  But $p | b(2a+b)$,
so $p | (2a+b)$.  If $p | a$ then $p | b$; contradicting $\gcd(a,b) = 1$. 
Therefore $p \nmid a$. Therefore $p | c$, since $p | a$ or $p | c$. 
Modulo $p$ we have
\begin{eqnarray*}
b \equiv -2a   && \mbox{\qquad since $p | (2a+b)$}\\
0 \equiv c^2 \equiv a^2 + b^2 + ab && \mbox{\qquad since $p | c$}\\
0  \equiv 3a^2 \pmod p  && \mbox{\qquad since $b \equiv -2a$}\\
p = 3                && \mbox{\qquad since $p \nmid a$}\\
\end{eqnarray*}
But that contradicts Lemma~\ref{lemma:not3divc}.
Therefore $(ac,b(2a+b),(a+b)c)$ have no common factor. 
Then by Lemma~\ref{lemma:claude3}, $\lambda$ is a positive integer in (\ref{eq:518}).

The area equation for this shape of triangle is 
$$ XY \sin (2\alpha+\beta) \ = \ N ab \frac {\sqrt 3} 2.$$
Substituting the values for $X$ and $Y$ from (\ref{eq:518}) into the area equation,
we obtain
\begin{eqnarray*}
\lambda^2 abc(2a+b) \sin(2\alpha+\beta) &=& \ N ab \frac {\sqrt 3} 2  \\
\lambda^2 c(2a+b) \sin(2\alpha+\beta)  &=& N \frac {\sqrt 3} 2  
\end{eqnarray*}
Putting in the value of $\sin(2\alpha+\beta)$ from Lemma~\ref{lemma:beta}, we have
\begin{eqnarray*}
\lambda^2 c(2a+b) \sqrt 3 \frac {a+b}{2c}  &=& N \frac {\sqrt 3} 2  \\
\lambda^2 (2a+b) (a+b)  &=& N 
\end{eqnarray*}
Since $\lambda$ is an integer, and $a,b,c$ are positive, we have a factorization of $N$.
Hence $N$ is not prime.
\end{proof}

\section{The case $(\alpha, \alpha+\beta, \alpha+2\beta)$}
\begin{theorem} \label{theorem:case4}  Let triangle $T$ have incommensurable angles 
$(\alpha, \alpha + \beta, \alpha + 2\beta)$, and suppose $T$ is $N$-tiled by $(\alpha,\beta,2\pi/3)$.
Then $N$ is not a prime.  Moreover, 
$$ N = \frac {\lambda^2 (a+b)} b,$$
where $(a,b,c)$ are the sides of the tile with no common factor and $\lambda \in \Z$.
\end{theorem}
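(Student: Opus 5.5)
The plan follows the proofs of Theorems~\ref{theorem:case2} and~\ref{theorem:case3}: law of sines, an integral scaling factor $\lambda$, the area equation, and then a divisibility argument.

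\textbf{Setup.} First I check that $T$ has no $2\pi/3$ angle.
\begin{itemize}
\item $\alpha<\pi/3$, and $\alpha+\beta=\pi/3$.
\item If $\alpha+2\beta=2\pi/3$, then $\beta=\pi/3$, contradicting incommensurability.
\end{itemize}
So $T$ is not similar to the tile. Lemma~\ref{lemma:rationality3} then makes $(a,b,c)$ commensurable, and we take them to be integers with no common factor.

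\textbf{Law of sines.} Let $X,Y,Z$ be the sides of $T$ opposite $\alpha$, $\alpha+\beta$, $\alpha+2\beta$. The three sines are:
\begin{itemize}
\item $\sin\alpha = a\sqrt3/(2c)$, by Lemma~\ref{lemma:claude};
\item $\sin(\alpha+\beta)=\sin(\pi/3)=\sqrt3/2$;
\item $\sin(\alpha+2\beta)=\sqrt3(a+b)/(2c)$, by Lemma~\ref{lemma:beta}.
\end{itemize}
Hence $X:Y:Z = a : c : (a+b)$, so $(X,Y,Z)=\lambda(a,c,a+b)$. By Lemma~\ref{lemma:pairwisecoprimality}, $\gcd(a,c)=1$, so the triple $(a,c,a+b)$ has no common factor. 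Lemma~\ref{lemma:claude3} then shows that $\lambda$ is a positive integer.

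\textbf{Area equation.} The angle between sides $X$ and $Y$ is the one opposite $Z$, namely $\alpha+2\beta$. So
$$XY\sin(\alpha+2\beta) = Nab\frac{\sqrt3}{2}.$$
Substituting gives $\lambda^2 ac\cdot \sqrt3(a+b)/(2c) = Nab\sqrt3/2$. Cancelling, $\lambda^2(a+b)=Nb$, which is the stated formula $N=\lambda^2(a+b)/b$.

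\textbf{Non-primality.} This is the only step that differs from the earlier cases, because the formula is not visibly a product of integers.
\begin{itemize}
\item Since $\gcd(a,b)=1$, also $\gcd(a+b,b)=1$.
\item From $b \mid \lambda^2(a+b)$ we get $b\mid\lambda^2$.
\item So $N = m(a+b)$ with $m=\lambda^2/b$ a positive integer.
\end{itemize}
By Lemma~\ref{lemma:claude4}, $a+b$ is composite, so in particular $a+b>1$. If $m>1$, then $N$ is a product of two integers greater than $1$. If $m=1$, then $N=a+b$, which is composite. Either way $N$ is not prime.

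The main thing to get right is that $m$ is an integer, which relies on the coprimality of $b$ and $a+b$. The rest is direct substitution.
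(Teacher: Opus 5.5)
Your proof is correct and follows the paper's argument step for step. The law of sines gives $(X,Y,Z)=\lambda(a,c,a+b)$, Lemma~\ref{lemma:claude3} makes $\lambda$ an integer, the area equation yields $N=\lambda^2(a+b)/b$, and $\gcd(a+b,b)=1$ together with Lemma~\ref{lemma:claude4} finishes. The only differences are cosmetic: you get coprimality of $(a,c,a+b)$ from $\gcd(a,c)=1$ rather than by deducing $p\mid b$, and you spell out the $m=1$ versus $m>1$ cases that the paper leaves implicit.
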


\begin{proof}  
 Since $\alpha, \beta < \pi/3$,  neither $\alpha$ nor $\alpha + \beta$ is equal to $2\pi/3$.
If $\alpha + 2\beta = 2\pi/3$ then $\beta = \pi/3$, since $\alpha + \beta = \pi/3$; but that
violates the incommensurable angles hypothesis.  Hence $T$ has no $2\pi/3$ angle. 
Therefore $T$ is not similar to the tile.
By Lemma~\ref{lemma:rationality3}, $(a,b,c)$ are commensurable.
Without loss of generality, we may assume they are integers with no common factor.
Then the side lengths $(X,Y,Z)$ are also integers, since they are integral linear
combinations of $(a,b,c)$. 
Let the sides of $T$ next to the $\alpha + 2\beta$ angle be $X$ and $Y$.  
Again we will show that $(X,Y,Z)$ are multiples of a triple with no common factor, and apply Lemma~\ref{lemma:claude3} 
and then the area equation.   Here are the details:

By the law of sines,
$$ X: {\sin \alpha} =   Y :{\sin(\alpha + \beta)} = Z :{\sin( \alpha + 2\beta)}.$$
Using the formulas from Lemmas~\ref{lemma:claude} and~\ref{lemma:beta},  as well as $\alpha + \beta = \pi/3$, we have

$$ X: \frac {a \sqrt 3}{2c} = Y:\frac { \sqrt 3 }2 = Z: \frac{\sqrt 3 (a+b)}{2c} .$$
$$ X: a = Y : c = Z: (a+b).$$
Then for some real $\lambda$ we have
\begin{equation}
 (X,Y,Z) = \lambda (a, c, a+b). \label{eq:575}
 \end{equation}
Now $(a, c, a+b)$ have no common factor, since if prime $p$ divides $a$, $c$, and $a+b$, then it also divides $b$,
 contradiction.  
Then by Lemma~\ref{lemma:claude3}, $\lambda$ is a positive integer in (\ref{eq:575}).

The area equation for this shape of triangle is 
$$ XY \sin (\alpha+2 \beta) \ = \ N ab \frac {\sqrt 3} 2.$$
Substituting the values for $X$ and $Y$ from (\ref{eq:575}) into the area equation,
we obtain
\begin{eqnarray*}
\lambda^2 ac \sin(\alpha+2 \beta) &=& \ N ab \frac {\sqrt 3} 2  \\
\lambda^2 c \sin(\alpha+2\beta)  &=& Nb \frac {\sqrt 3} 2  
\end{eqnarray*}
Putting in the value of $\sin(\alpha+2\beta)$ from Lemma~\ref{lemma:beta}, we have
\begin{eqnarray*}
\lambda^2 c \sqrt 3 \frac {a+b}{2c}  &=& Nb\frac {\sqrt 3} 2  \\
N &=& \frac {\lambda^2 (a+b)} b
\end{eqnarray*}
We have $\gcd(a+b,b) = \gcd(a,b) = 1$.  Hence $\lambda^2/b$ is an integer. 
 By Lemma~\ref{lemma:claude4}, $a+b$ is composite.  
Hence $N$ is not prime.
\end{proof}

\section{The case $(2\alpha, 2\beta, \alpha+\beta)$}
\begin{theorem} \label{theorem:case5}  Let triangle $T$ have incommensurable angles 
$(2\alpha, 2\beta, \alpha + \beta)$, and suppose $T$ is $N$-tiled by $(\alpha,\beta,2\pi/3)$.
Then $N$ is not a prime.  Moreover,
$$N = \lambda^2 (a+2b) (2a+b),$$
where $(a,b,c)$ are the sides of the tile with no common factor and $\lambda \in \Z$. 
\end{theorem}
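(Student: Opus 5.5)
We follow the same template as Theorems~\ref{theorem:case2}--\ref{theorem:case4}. First we check that $T$ has no $2\pi/3$ angle. Since $\alpha,\beta<\pi/3$, we have $2\alpha,2\beta<2\pi/3$. Also $\alpha+\beta=\pi/3$, so neither $2\alpha$ nor $2\beta$ equals $2\pi/3$ unless $\alpha$ or $\beta$ equals $\pi/3$, which is excluded. Hence $T$ is not similar to the tile. Lemma~\ref{lemma:rationality3} then gives commensurable $(a,b,c)$, which we scale to integers with no common factor.

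Let $X,Y,Z$ be the sides of $T$ opposite the angles $2\alpha,2\beta,\alpha+\beta$ respectively. The law of sines together with Lemmas~\ref{lemma:claude} and~\ref{lemma:beta} and $\sin(\alpha+\beta)=\sqrt3/2$ gives
$$X:\frac{a\sqrt3(a+2b)}{2c^2}=Y:\frac{b\sqrt3(2a+b)}{2c^2}=Z:\frac{\sqrt3}{2}.$$
Hence $(X,Y,Z)=\lambda\,(a(a+2b),\,b(2a+b),\,c^2)$ for some real $\lambda$.

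The key step is showing that this triple has no common prime factor $p$; Lemma~\ref{lemma:claude3} then makes $\lambda$ a positive integer. Since $p\mid c^2$, we have $p\mid c$, so $p\nmid a$ and $p\nmid b$ by Lemma~\ref{lemma:pairwisecoprimality}. It follows that $p\mid a+2b$ and $p\mid 2a+b$. Subtracting gives $p\mid a-b$, and then $p\mid 3b$. Since $p\nmid b$, we get $p=3$, which contradicts Lemma~\ref{lemma:not3divc}.

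For the area equation we use the angle $\alpha+\beta$, whose adjacent sides are $X$ and $Y$. This gives $XY\sin(\alpha+\beta)=Nab\frac{\sqrt3}{2}$, so $\lambda^2ab(a+2b)(2a+b)=Nab$, and therefore $N=\lambda^2(a+2b)(2a+b)$. Both factors $a+2b$ and $2a+b$ are at least $3$, because $a,b\ge1$. So $N$ is a product of two integers greater than $1$ and is not prime.

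The only step needing care is the gcd argument, and Lemma~\ref{lemma:not3divc} handles the one troublesome prime $p=3$. Everything else is direct substitution.
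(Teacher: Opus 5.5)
Your proposal is correct and follows the paper's proof essentially step for step. It uses the same exclusion of a $2\pi/3$ angle, the same law-of-sines triple $(a(a+2b),\,b(2a+b),\,c^2)$, Lemma~\ref{lemma:claude3} to make $\lambda$ an integer, and the same area equation $XY=Nab$. The only difference is cosmetic and lies in the gcd step: you combine $p\mid a+2b$ and $p\mid 2a+b$ to get $p\mid 3b$, whereas the paper substitutes $b\equiv -2a$ into $c^2=a^2+ab+b^2$ to get $p\mid 3a^2$; both reach $p=3$ and finish with Lemma~\ref{lemma:not3divc}.
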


\begin{remark} This is Yan's Conjecture~5.1 in \cite{zhang2026}. He proved
this formula under the assumption $a \not\equiv b \pmod 3$.  Lemma~\ref{lemma:not3divc}
renders that assumption unnecessary; but here we give an independent proof. 
\end{remark}

\begin{proof} 
 Since $\alpha, \beta < \pi/3$,  neither $2\alpha$ nor $2\beta$ nor $\alpha+\beta$ is equal to $2\pi/3$. 
Therefore $T$ is not similar to the tile.
By Lemma~\ref{lemma:rationality3}, $(a,b,c)$ are commensurable.
Without loss of generality, we may assume they are integers with no common factor.
Then the side lengths $(X,Y,Z)$ are also integers, since they are integral linear
combinations of $(a,b,c)$.  
Let the sides of $T$ next to the $\alpha + \beta$ angle be $X$ and $Y$.  
Again we will show that $(X,Y,Z)$ are multiples of a triple with no common factor, and apply Lemma~\ref{lemma:claude3} 
and then the area equation.   Here are the details:

By the law of sines,
$$ X: {\sin 2\alpha} =   Y :{\sin 2\beta} = Z :{\sin (\alpha + \beta)}.$$
Using the formulas from Lemmas~\ref{lemma:claude} and~\ref{lemma:beta},  as well as $\alpha + \beta = \pi/3$, we have
$$ X: \frac {a \sqrt 3 (a+2b)}{2c^2} = Y:\frac {b \sqrt 3 (2a+b)}{2c^2} = Z: \frac{\sqrt 3 } 2 .$$
$$ X: a(a+2b) = Y : b(2a+b) = Z: c^2.$$
Then for some real $\lambda$ we have
\begin{equation}
 (X,Y,Z) = \lambda (a(a+2b), b(2a+b), c^2). \label{eq:618}
 \end{equation}
 We will show that $(a(a+2b), b(2a+b), c^2)$ have no common factor.  
 Suppose the prime $p$ divides all three. Then $p|c$, so $p \nmid a$ and $p \nmid b$,
 by Lemma~\ref{lemma:pairwisecoprimality}.
 Therefore $p \mid (2a + b)$.  Then
 \begin{eqnarray*}
 b \equiv -2a \pmod p   && \mbox{\qquad since $p \mid 2a+b$}\\
 0  \equiv 3a^2 \pmod p && \mbox{\qquad since $c^2 = a^2 + b^2 + ab$ and $c \equiv 0$}\\
 p = 3                   && \mbox{\qquad since $p \nmid a$} \\
 3 \nmid c               && \mbox{\qquad by  Lemma~\ref{lemma:not3divc}} \\
 p \nmid c               && \mbox{\qquad by the preceding two lines} 
 \end{eqnarray*}
 But that contradicts the assumption $p \mid c^2$.  That completes
 the proof that  $(a(a+2b), b(2a+b), c^2)$ have no common factor.
Then by Lemma~\ref{lemma:claude3}, $\lambda$ is a positive integer in (\ref{eq:618}).

The area equation  is 
$$ XY \sin (\alpha +\beta) \ = \ N ab \frac {\sqrt 3} 2.$$
Since $\alpha + \beta = \pi/3$, $\sin(\alpha+\beta) = \tfrac{\sqrt 3} 2$, so 
the area equation becomes
$$ XY = N ab.$$
Substituting the values for $X$ and $Y$ from (\ref{eq:618}) into the area equation,
we obtain
\begin{eqnarray*}
\lambda^2 a(a+2b) b(2a+b)&=& \ N ab  \\
N &=& \lambda^2 (a+2b) (2a+b)  
\end{eqnarray*}
Since $\lambda$ is an integer and $a$ and $b$ are positive, 
 $N$ is not prime.
\end{proof}

\section{The main theorem}

\begin{theorem} \label{theorem:main}
Let triangle $T$  be $N$-tiled by a tile not similar to $T$.  Suppose $N > 3$.
 Then $N$ is not prime.
\end{theorem}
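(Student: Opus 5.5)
The plan is a case split that routes every configuration to one of the results already established, assuming throughout that $N>3$ is prime and deriving a contradiction. Since the tile $R$ is not similar to $T$, the tiling is not a reptiling, so Theorem~\ref{theorem:reptilenotprime} never needs to be invoked. (That theorem is where the biquadratic exceptions live, and they are excluded by the hypothesis.) The first split is on the shape of $T$.

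If $T$ is equilateral, Theorem~\ref{theorem:equilateral} gives that $N$ is not prime, since $N\neq 3$. If $T$ is isosceles but not equilateral, Theorem~\ref{theorem:isosceles}(iii) applies directly, because $T$ is not similar to $R$ and $N\neq 2$. From now on I will assume $T$ is not isosceles.

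Next I will split according to whether the angles of $T$ are commensurable. If they are, Theorem~\ref{theorem:l1995-5.3} says the tiling is a reptiling. That contradicts the hypothesis that the tile is not similar to $T$, so this case cannot occur.

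If the angles of $T$ are incommensurable, then those of $R$ are too, by the remark preceding Theorem~\ref{theorem:l1995-4.1} (or by Lemma~\ref{lemma:RandT}). Theorem~\ref{theorem:l1995-4.1} then applies, and since option (i), a reptiling, is excluded, the tiling lies in Group~1 or Group~2.

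In Group~1 we have $3\alpha+2\beta=\pi$, and Theorem~\ref{theorem:triangletiling3} shows $N$ is not prime. In Group~2 we have $\gamma=2\pi/3$, and $T$ has one of four angle patterns:
\begin{itemize}
\item $(\alpha,2\alpha,3\beta)$, handled by Theorem~\ref{theorem:case2};
\item $(\alpha,2\beta,2\alpha+\beta)$, handled by Theorem~\ref{theorem:case3};
\item $(\alpha,\alpha+\beta,\alpha+2\beta)$, handled by Theorem~\ref{theorem:case4};
\item $(2\alpha,2\beta,\alpha+\beta)$, handled by Theorem~\ref{theorem:case5}.
\end{itemize}
In every case $N$ is not prime, which is the desired contradiction.

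The main obstacle is bookkeeping with labels rather than mathematics. Theorem~\ref{theorem:l1995-4.1} lists the shapes of $T$ with a particular assignment of the names $\alpha$ and $\beta$ to the two non-$2\pi/3$ angles of the tile. I need to check that the case theorems were stated for exactly that labeling, or that they are symmetric under swapping $a\leftrightarrow b$ and $\alpha\leftrightarrow\beta$. This swap is legitimate because the condition $3\alpha+3\beta=\pi$ is invariant under it, as noted in the proof of Lemma~\ref{lemma:beta}.

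I also need to confirm that each cited result's hypotheses are met:
\begin{itemize}
\item the incommensurability assumed in Theorems~\ref{theorem:case2}--\ref{theorem:case5} holds here;
\item non-similarity is given;
\item the Group~1 theorem needs only non-similarity.
\end{itemize}
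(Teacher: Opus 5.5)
Your proposal is correct and follows the paper's proof essentially step for step. You dispose of equilateral and isosceles $T$ via Theorems~\ref{theorem:equilateral} and~\ref{theorem:isosceles}, rule out commensurable angles by Theorem~\ref{theorem:l1995-5.3}, and use Theorem~\ref{theorem:l1995-4.1} to reduce to Group~1 (Theorem~\ref{theorem:triangletiling3}) and the four Group~2 shapes (Theorems~\ref{theorem:case2}--\ref{theorem:case5}); your labeling worry is moot, since those four theorems are stated for exactly the shapes and labeling listed in Theorem~\ref{theorem:l1995-4.1}.
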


\begin{proof}  If $T$ is isosceles, we are done by Theorem~\ref{theorem:isosceles}, part~(iii).
If $T$ is equilateral, we are done by Theorem~\ref{theorem:equilateral}.   If 
the angles of $T$ were commensurable, then by Theorem~\ref{theorem:l1995-5.3},
the tiling would be a reptiling.  But that contradicts the hypothesis that $T$ is 
not similar to the tile.   Therefore, the angles of $T$ are incommensurable.

By Theorem~\ref{theorem:l1995-4.1}, together with the hypothesis that $T$ is not similar to the tile, 
 the tiling must fall into Group~1 ($3\alpha + 2\beta =\pi$) or Group~2 ($\gamma = 2\pi/3$).
Also as discussed above, the Group~1 case has already been proved.  That leaves only tilings with 
$\gamma = 2\pi/3$ to consider.  According to Theorem~\ref{theorem:l1995-4.1}, $T$ must have
one of the following shapes:
$$(\alpha, 2\alpha, 3\beta), (\alpha, 2\beta, 2\alpha + \beta), (\alpha, \alpha+\beta, \alpha+2\beta), \text{or }  (2\alpha, 2\beta, \alpha+\beta).$$
In the preceding four theorems, it has been shown that in each of four cases, $N$ cannot be prime.
\end{proof}

\begin{corollary} \label{corollary:634}
Let $N$ be prime.  Then there is an $N$-tiling of some triangle if and only if $N = 2$, $N =3$, or 
$N \equiv 1 \pmod 4$.
\end{corollary}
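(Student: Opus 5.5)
The corollary has two directions, and I will prove them separately.

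\emph{Sufficiency.} For $N=2$, cut any isosceles triangle in half along its altitude; the two halves are congruent. For $N=3$, cut an equilateral triangle into three congruent triangles by joining its center to the vertices. Alternatively, use the 3-tiling of the 30-60-90 triangle in the Remark after Theorem~\ref{theorem:reptilenotprime}. For $N$ prime with $N\equiv 1 \pmod 4$, Fermat's two-squares theorem gives $N = M^2+K^2$ with $M,K$ positive integers. The right triangle with legs $M$ and $K$ is then $N$-reptiled: this is the biquadratic tiling mentioned in Theorem~\ref{theorem:reptile}. Concretely, drop the altitude from the right angle. This splits $T$ into two triangles similar to $T$, with similarity ratios $M/\sqrt{N}$ and $K/\sqrt{N}$. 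Tile these by $M^2$ and $K^2$ congruent copies of a triangle similar to $T$ with hypotenuse scaled by $1/\sqrt N$, using the standard square subdivisions. That gives $M^2+K^2=N$ congruent tiles. Since Theorem~\ref{theorem:reptile} asserts this case is realized, I will simply cite it.

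\emph{Necessity.} Suppose $N$ is prime, some triangle $T$ is $N$-tiled, and $N\notin\{2,3\}$, so $N>3$. Split according to whether the tile is similar to $T$.

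If the tile is not similar to $T$, Theorem~\ref{theorem:main} says $N$ is not prime, which is a contradiction.

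If the tile is similar to $T$, the tiling is a reptiling. Theorem~\ref{theorem:reptilenotprime} then gives $N\equiv 1\pmod 4$.

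\emph{Main obstacle.} Essentially all the difficulty lies in Theorem~\ref{theorem:main}, which is already established, so the corollary is a short case split. The only point needing care is the existence half for $N\equiv 1\pmod 4$. There I rely on Theorem~\ref{theorem:reptile}'s assertion that right triangles with leg ratio $M/K$ really do admit $(M^2+K^2)$-tilings, together with Fermat's theorem on sums of two squares.
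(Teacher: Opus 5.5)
Your proposal is correct and matches the paper's proof essentially step for step. For necessity, Theorem~\ref{theorem:main} forces a reptiling and hence $N\equiv 1 \pmod 4$; you cite Theorem~\ref{theorem:reptilenotprime} where the paper re-runs the case analysis of Theorem~\ref{theorem:reptile}. For sufficiency, both use the altitude of an isosceles triangle, a $3$-tiling (your equilateral-from-the-center tiling is a valid alternative), and the biquadratic reptiling of the right triangle with legs $M,K$, where $N=M^2+K^2$.
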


\begin{proof} {\em Left to right}: Suppose $N$ is prime and there is an $N$-tiling of some triangle, and $N > 3$.
Then by Theorem~\ref{theorem:main},  the tiling is a reptiling. By Theorem~\ref{theorem:reptile},
$N$ is a square, or a sum of two squares, or three times a  square.  Since $N > 3$ and $N$ is prime,
it is not three times a square.  Then 
it must be a sum of two squares.  Then $N \equiv 1 \pmod 4$.
\smallskip

{\em Right to left}:  If $N = 2$, just divide any isosceles triangle in two by its altitude.  If $N=3$,
the $30-60-90$ triangle can be 3-tiled, as illustrated in Figure~\ref{figure:tiling13}.  If $N > 3$ is congruent to 1 mod 4,  it is a sum of two squares,
as is shown in every number theory textbook.  Let $N = e^2 + f^2$, with integers $e,f > 0$. 
Then the right triangle with legs $e,f$ can be reptiled into $N$ tiles, as illustrated for $N=13$ 
in Figure~\ref{figure:tiling3}.  In general, the altitude splits the triangle into two similar copies, subdivided quadratically into 
$e^2$ and $f^2$ tiles.
\end{proof}

\begin{remark} 
See Figure~\ref{figure:tiling13} for pictures of the exceptional prime tilings.
\end{remark}

\enlargethispage{3\baselineskip}

\section*{Use of AI}
We used Claude (Anthropic) for proof-checking and copy-editing. 
But also Lemma~\ref{lemma:claude3} was proved on
July 22, 2026 by the LLM, Claude Fable.  It was invented by Claude
in the course of answering the question, whether a triangle 
with angles $(\alpha,2\alpha,3\beta)$ with $\alpha$ not a rational
multiple of $\pi$ can be cut into a prime number of congruent triangles.
This prompt was given to Claude 
by Grigore Roșu.    The proof of Lemma~\ref{lemma:claude3}  presented here
is a hand-rewritten version of Claude's.  
This lemma is short, but important.
Also, Lemma~\ref{lemma:not3divc} was discovered and proved by Claude Fable.
The formulas for $N$ in the four last theorems were already in \cite{zhang2026},
but were rediscovered by Claude without looking at  \cite{zhang2026}.  

 \section*{Acknowledgement}
 Thanks to  Grigore Roșu for applying Claude Fable to this problem. 

\begingroup
\raggedright
\bibliographystyle{plainurl}

\end{document}